\definecolor{shadecolor}{rgb}{0.8,0.8,0.8}
\newtheorem{theorem}{Theorem}[section]
\newtheorem{lemma}[theorem]{Lemma}
\newtheorem{proposition}[theorem]{Proposition}
\newtheorem{corollary}[theorem]{Corollary}
\newtheorem{example}{Example}[section]
\newcommand{\specexercise}[1]{}
\newenvironment{proof}{{\flushleft \emph{Proof:}}}{\hfill\ding{110}}
\newenvironment{proof1}[1]{{\flushleft \emph{Proof #1}}}{\hfill\ding{110}}
\newenvironment{remark}{{\flushleft \fontfamily{pzc}\bfseries\large Remark:}}{}
\newcommand{\dist}{\operatorname{dist}}
\newcommand{\SO}[1]{\text{SO}(#1)}
\newcommand{\id}{\operatorname{id}}
\newcommand{\diag}{\operatorname{diag}}
\newcommand{\tr}{\operatorname{tr}}
\newcommand{\argmin}{\operatornamewithlimits{arg\, min}}
\newcommand{\argmax}{\operatornamewithlimits{arg\, max}}
\newcommand{\brk}[1]{\left(#1\right)}          
\newcommand{\BRK}[1]{\left\{#1\right\}}        
\newcommand{\Abs}[1]{\left| #1 \right|}        
\newcommand{\inner}[1]{\left\langle#1\right\rangle}      
\newcommand{\beq}{\begin{equation}}
\newcommand{\eeq}{\end{equation}}
\newcommand{\Emph}[1]{{\slshape\bfseries #1}}  
\newcommand{\weakly}[1]{\stackrel{#1}{\rightharpoonup}}
\newcommand{\R}{\mathbb{R}}
\newcommand{\calR}{\mathcal{R}}
\newcommand{\calW}{\mathcal{W}}
\newcommand{\calQ}{\mathcal{Q}}
\newcommand{\calH}{\mathcal{H}}
\newcommand{\calP}{\mathcal{P}}
\newcommand{\bbM}{\mathbb{M}}
\newcommand{\bbZ}{\mathbb{Z}}
\newcommand{\e}{\varepsilon}
\newcommand{\W}{\Omega}
\newcommand{\w}{\omega}
\newcommand{\pl}{\partial}
\renewcommand{\thefootnote}{\fnsymbol{footnote}}
\numberwithin{equation}{section}
\begin{document}

\title{Reference configurations vs.~optimal rotations:\\ a derivation of linear elasticity from finite elasticity\\ for all traction forces
}
\author{Cy Maor and Maria Giovanna Mora\thanks{Corresponding author}}

\newcommand{\Addresses}{{
  \bigskip
  \footnotesize

  C.~Maor, \textsc{Einstein Institute of Mathematics,
  The Hebrew University of Jerusalem, Israel}\par\nopagebreak
  \textit{E-mail address}: \texttt{cy.maor@mail.huji.ac.il}

  \medskip

  M.G.~Mora, \textsc{Dipartimento di Matematica, Universit\`a di Pavia, Italy}\par\nopagebreak
  \textit{E-mail address}: \texttt{mariagiovanna.mora@unipv.it}

}}

\date{}
\maketitle
\renewcommand{\thefootnote}{\arabic{footnote}}

\abstract{
We rigorously derive linear elasticity as a low energy limit of pure traction nonlinear elasticity.
Unlike previous results, we do not impose any restrictive assumptions on the forces, and obtain a full $\Gamma$-convergence result.
The analysis relies on identifying the correct reference configuration to linearize about, and studying its relation to the rotations preferred by the forces ({\em optimal rotations}).
The $\Gamma$-limit is the standard linear elasticity model, plus a term that penalizes for fluctuations of the reference configurations from the optimal rotations. However, on minimizers this additional term is zero and the limit energy reduces to standard linear elasticity.
}

{\footnotesize 
\tableofcontents
}

\section{Introduction --- how to choose a reference configuration?}
\paragraph{Derivation of linear elasticity from finite elasticity}
In nonlinear (or finite) hyperelasticity, the elastic problem consists of minimizing an elastic energy over deformations $y:\W\to \R^n$, where $\W\subset \R^n$ is the elastic body.
\emph{Linear elasticity} is the linearization of this problem about a reference configuration: under the assumption that the displacement $u(x) := y(x)-x$ is small, one obtains a quadratic energy-minimization problem for $u$.
While this derivation of linear elasticity has been a textbook material for a very long time, only less than 20 years ago the first fully rigorous justification of it was obtained, via variational convergence, in \cite{DNP02}.
There, the authors considered the elastic energy of the type 
\[
\bar{J}_\e(y) := \int_\W \calW(x,\nabla y)\,dx - \e\int_\W g\cdot y\,dx, \qquad y\in W^{1,2}_{\e v_0}(\W;\R^n),
\]
where $W(x,A)$ is the elastic energy density, $g\in L^2(\W;\R^n)$ is the body forces, and $W^{1,2}_{\e v_0}(\W;\R^n)$ is the space of all maps $y\in W^{1,2}(\W;\R^n)$ such that $y = x + \e v_0$ on $\pl\W_D$, where $v_0$ is a given vector field
and $\pl\W_D$ is a prescribed subset of $\pl\W$.
They showed that the functionals $\frac{1}{\e^2}(\bar{J}_\e(y_\e) - \bar{J}_\e(\id))$, where $\id :\W\to \W$ is the identity map, $\Gamma$-converge to a linear elastic functional
\beq
\label{eq:linearized_Dirichlet}
I(u) = \int_\W \calQ(x,e(u))\,dx - \int_\W g\cdot u\,dx,
\eeq
where $u$ is the limit of the rescaled displacements $u_\e = \frac{1}{\e}(y_\e (x)-x)$, $e(u)$ is its symmetric gradient, and $\calQ$ is the quadratic form obtained from linearizing $\calW$ at the identity (see \eqref{eq:C_2_regularity}).
They also showed the associated compactness result; namely, if $\bar{J}_\e(y_\e) - \bar{J}_\e(\id)\le C\e^2$, then $u_\e$ weakly converge to some $u$ (modulo a subsequence).

Of course, the map $\id:\W\to \R^n$ is not the only reference configuration of the elastic body $\W$; any isometric embedding $Rx + c$, where $R\in \SO{n}$ and $c\in \R^n$, is.
Nevertheless, the choice of $\id$ as a reference configuration in \cite{DNP02} is a natural one, as they show that boundary conditions force $y-\id$ to be small in $W^{1,2}$.

A recent paper, \cite{MPT19}, approached the analogous problem, but with Neumann boundary conditions instead of Dirichlet. That is, they considered the pure traction problem
\beq\label{eq:energy_full}
\bar{J}_\e(y) := \int_\W \calW(x,\nabla y)\,dx -  \e\int_{\pl \W} f\cdot y\, \,d\calH^{n-1}- \e\int_\W g\cdot y\,dx, \qquad y\in W^{1,2}(\W;\R^n),
\eeq
where $f\in L^2(\pl\W;\R^n)$ and $g\in L^2(\W;\R^n)$ are the traction forces and body forces, respectively, which 
are \emph{equilibrated} in the sense that the energy $\bar{J}_\e$ is invariant to translations.
Furthermore, they assume a certain non-degeneracy condition (called \emph{compatibility} there); as explained later on, it is equivalent to the assumption that among all rigid motions, $\bar{J}_\e$ is minimized at $\id$, which is a unique minimizer (up to translations).
The fact that $\id$ is a minimizer among rigid motions can always be guaranteed by rotating the whole system; the fact that it is a unique minimizer, however, does limit the admissible forces. 

Under these assumptions, as in the Dirichlet case, they analyze the energy $J_\e(y):= \bar{J}_\e(y) - \bar{J}_\e(\id)$.
The analysis in this case turns out to be trickier than in the Dirichlet case, with some surprising results:
\begin{enumerate}
\item It turns out that a sequence of displacements $u_\e = \frac{1}{\e}(y_\e(x)-x)$ associated with approximate minimizers $y_\e$ of $\frac{1}{\e^2} \bar{J}_\e$ needs not to be bounded in $W^{1,2}$; 
	in fact, one can only obtain, after moving to a subsequence, that $e(u_\e)\weakly{} e(u)$, and $\sqrt{\e} \nabla u_\e \to W$ for some $u\in W^{1,2}$ and $W\in \mathbb{M}^{n\times n}_{\text{skew}}$ \cite[Theorem~2.2]{MPT19}.
\item The limiting $u$ does not minimize the expected linear elastic functional \eqref{eq:linearized_Dirichlet}, but rather the energy
	\[
	\tilde{I}(u) = \min_{W\in \mathbb{M}^{n\times n}_{\text{skew}}} \int_\W \calQ\brk{x,e(u) - \frac{1}{2}W^2}\,dx - \int_{\pl \W} f\cdot u\, \,d\calH^{n-1}- \int_\W g\cdot u\,dx.
	\]
	This energy is further investigated in a sequel paper, \cite{MPT19b}.
\item Unlike \cite{DNP02}, there is no full $\Gamma$-limit, but rather a statement about approximate minimizers. 
\end{enumerate}

\paragraph{Reference configurations and optimal rotations}
The above-mentioned works defined the displacement with respect to a reference configuration that is dictated by the problem; that is, by the boundary conditions or the forces.
In this work, we show that by choosing, for a given deformation, the rigid motion closest to it as its reference configuration, one can obtain stronger and more general results.
More precisely, we define the reference configuration of a deformation $y\in W^{1,2}(\W;\R^n)$ as the map $Rx+c$, $R\in \SO{n}$, $c\in \R^n$ that minimizes the displacement, that is\footnote{The fact that the minimum here is comparable with the elastic energy of $y$ is the content of the celebrated Friesecke-James-M\"uller rigidity theorem \cite[Theorem~3.1]{FJM02b}, which is the key technical tool for rigorously establishing limiting theorems for low-energy elastic systems.}
\beq\label{eq:reference_configuration}
Rx+c \in \argmin\big\{\|y(x) - (Qx+d)\|_{W^{1,2}} ~:~ Q\in \SO{n},\,d\in \R^n\big\}.
\eeq
In this case, one should distinguish between the reference configuration induced by a deformation $y$, and the preferred rotations of the forces, which we call optimal rotations.
Formally, for the energy \eqref{eq:energy_full}, we define the \Emph{set of optimal rotations} as
\[
\calR := \argmax_{R\in \SO{n}} \BRK{ F(R)},
\]
where $F\in \brk{\bbM^{n\times n}}^*$ is the linear functional defined by the forces, that is,
\beq\label{eq:def_F}
F(A) := \int_{\pl \W} f\cdot A x\, \,d\calH^{n-1} + \int_\W g\cdot A x\, dx.
\eeq
In this setting the correct normalization of the energy to consider is 
$$
J_\e(y):= \bar{J}_\e(y) + \e\int_{\pl \W} f\cdot \bar R x\, \,d\calH^{n-1} +\e \int_\W g\cdot \bar R x\, dx, \qquad
\bar R\in\calR,
$$ 
that is, the deviation of $\bar J_\e$ from its value on optimal rotations.
By rotating the system, we can always assume that $I\in \calR$ and thus, define $J_\e(y) := \bar{J}_\e(y) - \bar{J}_\e(\id)$.
As shown in Corollary~\ref{cor:compatibility}, the compatibility assumption of \cite{MPT19} is equivalent to saying that $\calR=\{I\}$.

\paragraph{Main results}
In this paper we address the pure traction elastic problem \eqref{eq:energy_full}, using the definitions of reference configurations, optimal rotations, and normalized energy as discussed above.
That is, for a given deformation $y_\e\in W^{1,2}(\W;\R^n)$, whose reference configuration according to \eqref{eq:reference_configuration} is $R_\e x+ c_\e$, we define its \Emph{rescaled displacement} by
\[
u_\e = \frac{1}{\e} R_\e^T \brk{y_\e - (R_\e x + c_\e)}.
\]
We obtain the following:
\begin{enumerate}
\item First, we prove that the set of optimal rotations $\calR$ is a totally-geodesic submanifold of $\SO{n}$ (Proposition~\ref{pn:calR_manifold}).
	This geometric observation is important for the following analysis. We also give a complete classification of the possible optimal rotations in dimensions $n=2,3$ (Section~\ref{sec:examples}).
\item Compactness (Theorem~\ref{thm:general_compactness}): 
	If $\frac{1}{\e^2}J_\e(y_\e)$ is bounded, then, modulo a subsequence, we have
	\begin{itemize}
	\item $u_\e \weakly{} u_0$ in $W^{1,2}(\W;\R^n)$,
	\item $R_\e \to R_0$ for some $R_0\in \calR$,
	\item $\frac{1}{\sqrt{\e}}(R_\e - \calP(R_\e)) \to A_0$, where $\calP(R_\e)$ is the projection of $R_\e$ onto $\calR$, and $A_0$ is an element of the normal bundle at $R_0$ of $\calR$ in $\SO{n}$.
		We can write $A_0 = R_0 W_0$ for some $W_0 \in \mathbb{M}^{n\times n}_{\text{skew}}$.
	\end{itemize} 
\item $\Gamma$-convergence (Theorem~\ref{thm:general_Gamma}): Under the above notion of convergence $y_\e \to (u_0,R_0,W_0)$, the functional $J_\e(y_\e)$ $\Gamma$-converges to
	\[
	I(u_0,R_0,W_0) := \int_\W \calQ(x,e(u_0(x)))\,dx - \int_{\pl \W} f\cdot R_0 u_0 \, \,d\calH^{n-1} -  \int_\W g\cdot R_0 u_0 \, dx - \frac{1}{2}F(R_0W_0^2),
	\]
	where $F$ is defined in \eqref{eq:def_F}.\footnote{Under the assumption that $\calR=\{I\}$, this functional coincides with the functional obtained in \cite{MPT19}, under the change $u_0(x) \mapsto u_0(x) - \frac{1}{2} W_0^2 x$ in the functional above.}
\end{enumerate}

It turns out that this viewpoint, compared to the one of \cite{MPT19}, provides better compactness properties, a full $\Gamma$-convergence result, and it is valid for \emph{all} equilibrated forces (in particular, the assumption $\calR=\{I\}$ is not necessary for a rigorous validation of linear elasticity).
On a more technical point, our proofs are simpler, and work for any dimension $n$, whereas the proofs in \cite{MPT19} rely on the Rodrigues rotation formula (see \eqref{eq:Rodrigues}), which is only valid for $n=2,3$.

Our approach also gives a geometric interpretation to the difference between the Dirichlet and Neumann derivations of linear elasticity: 
whereas in the Dirichlet case, the rotational part $R_\e$ of the reference configuration differs from the rotation prescribed by the boundary data by an order of $\e$ (see \cite{DNP02}, equation (3.14)), 
in the Neumann case the distance between $R_\e$ and the optimal rotations prescribed by the forces is only of order $\sqrt{\e}$.\footnote{We note that a related observation appears in \cite[Remark 2.9]{MPT19}.}
From a mechanical point of view, it means that a low energy pure traction elastic body can fluctuate more compared to a low energy elastic body which is clamped in part of its boundary.

Finally, we note that the term $-\frac{1}{2}F(R_0W_0^2)$ that appears in the limiting energy, does not appear in the standard linear elastic energy, such as \eqref{eq:linearized_Dirichlet} (this can be viewed as a manifestation of the ``gap'', as it is called in \cite{MPT19}, between standard linear elasticity and its rigorous derivation from finite elasticity for pure traction problems).
This term represents the elastic cost of fluctuations of the reference configurations from the optimal rotations; in the Dirichlet case, these fluctuations are smaller, and their elastic cost does not appear in this energy scaling.
However, note that the term $-\frac{1}{2}F(R_0W_0^2)$ is non-negative, since $R_0$ is an optimal rotation (see \eqref{eq:forces_EL_new} below); therefore, from a minimization point of view, we can always choose $W_0=0$, thus eliminating it.
More precisely, we show that minimizers of $J_\e$ converge to minimizers of $I$ of the form $(u_0,R_0,0)$, which reduces $I$ to the standard linear elasticity energy (see Theorem~\ref{thm:conv_min}), with the slight difference that formal derivations of linear elasticity typically focus on linearization about a fixed optimal rotation and thus do not consider $R_0$ explicitly. In other words, the standard linear elasticity energy gives the correct asymptotic description of minimizers of finite elasticity for small forces not only in the Dirichlet case, but also for all pure traction problems.

After this work was essentially complete, we learned about the papers \cite{MP20b} and \cite{JS20}, where the authors study the derivation of pure traction linear elasticity from finite elasticity for incompressible materials. In \cite{MP20b} the external forces are assumed to satisfy the same compatibility condition as in \cite{MPT19}, that is, in our language $\calR=\{I\}$. In \cite{JS20} the assumptions on the forces imply the other extreme, namely that $\calR=\SO{n}$. We believe that our approach, adapted to the incompressible case, should be able to unify these two results and extend them to all forces.

\paragraph{Structure of this paper}
In Section~\ref{sec:model} we describe in more detail the elastic energy $J_\e$ that we are considering, and define the set of optimal rotations $\calR$ induced by it.
In Section~\ref{sec:prel_est} we give some standard preliminary estimates, regarding (a) the distance between a deformation and its reference configuration (Lemma~\ref{lem:using_FJM}, in which the Friesecke-James-M\"uller rigidity theorem comes into play), and (b) the scaling of the infimum of elastic energy $J_\e$ (Proposition~\ref{prop:inf_energy}), which justifies the energy scaling considered.
In Section~\ref{sec:geometry_calR} we treat the geometry of the set of optimal rotations $\calR$, and show that it is a totally-geodesic submanifold of $\SO{n}$ (Proposition~\ref{pn:calR_manifold}).
In Section~\ref{sec:main_results} we state and prove our main results --- compactness (Theorem~\ref{thm:general_compactness}), $\Gamma$-convergence (Theorem~\ref{thm:general_Gamma}), and convergence of minimizers (Theorem~\ref{thm:conv_min}).
In Section~\ref{sec:examples} we give a full classification of the possible sets of optimal rotations that can arise in two and three dimensions, and provide examples for each.

\section{The model}\label{sec:model}

Let $\W \subset \R^n$ be a Lipschitz domain, and consider the energy $\bar{J}_\e: W^{1,2}(\W;\R^n) \to \R\cup \{+\infty\}$, defined by
\[
\bar{J}_\e(y) := \int_\W \calW(x,\nabla y) \,dx - \e\int_{\pl \W} f\cdot y\, \,d\calH^{n-1} - \e \int_\W g\cdot y\, dx,
\]
where $\calW: \W\times \mathbb{M}^{n\times n} \to [0,\infty]$ is the elastic energy density, a Carath\'eodory function satisfying the following assumptions:
\begin{enumerate}[label=(\alph*)]
\item \label{itm:frame_ind} Frame indifference: $\calW(x,RA) = \calW(x,A)$ for a.e.\ $x\in \W$, all $A\in \mathbb{M}^{n\times n}$ and $R\in \SO{n}$. 
\item \label{itm:well} $\calW(x,A) = 0$ if and only if $A\in \SO{n}$. 
\item \label{itm:coercive} Coercivity: There exists $c>0$ such that $\calW(x,A) \ge c \dist^2(A,\SO{n})$ for all $A\in \mathbb{M}^{n\times n}$ and a.e.\ $x\in \W$.
\item \label{item:regularity} Regularity: There exists a neighborhood of $\SO{n}$ in which $\calW(x,\cdot)$ is $C^2$ uniformly in $x$:
	\beq\label{eq:C_2_regularity}
	\left|\calW(x, I + B) - \calQ(x,B)\right| \le \omega(|B|), \qquad \calQ(x,B) := \frac{1}{2}D_A^2\calW(x,I)(B,B)
	\eeq
	where $\omega:[0,\infty)\to [0,\infty]$ is a function satisfying $\lim_{t\to 0} \omega(t)/t^2 = 0$. 
	Moreover, $D_A^2\calW(\cdot,I)$ is a bounded function in $\W$.
\end{enumerate}
We note that assumptions \ref{itm:well} and \ref{itm:coercive} imply that 
\beq\label{eq:Q_symmetric}
\calQ(x,B) = \calQ\brk{x,{\rm sym}\, B}\geq c\,|{\rm sym}\, B|^2
\eeq
for all $B\in \mathbb{M}^{n\times n}$ and a.e.\ $x\in \W$.

We assume that the forces $f$ and $g$ are \emph{equilibrated}, that is,
\beq
\label{eq:total_force_2}
\int_{\pl \W} f \, \,d\calH^{n-1} + \int_\W g\, dx = 0.
\eeq
Without this assumption, by changing $y\mapsto y+c$ we can make $\bar{J}_\e$ arbitrary small, i.e., $\inf \bar{J}_\e = -\infty$.

Let
\[
F\in \brk{\bbM^{n\times n}}^*, \qquad F(A) := \int_{\pl \W} f\cdot A x\, \,d\calH^{n-1} + \int_\W g\cdot A x\, dx,
\]
and define the \textbf{set of optimal rotations} $\calR$ by
\[
\calR := \argmax_{R\in \SO{n}} \BRK{ F(R)}.
\]
Fix $\bar{R}\in \calR$. By changing $f\mapsto \bar{R}^T f$, $g\mapsto \bar{R}^T g$ and $y\mapsto \bar{R}^T y$, we can assume without loss of generality that $\bar{R} = I$.
In particular, we have
\beq\label{eq:forces_new}
F(R-I) = \int_{\pl \W} f\cdot (R-I) x\, \,d\calH^{n-1} +  \int_\W g\cdot (R-I) x\, dx \le 0,
\eeq
with equality holding if and only if $R\in \calR$.

Let $I_\e$ be the elastic part of $\bar{J}_\e$, i.e.,
\[
I_\e(y) := \int_\W \calW(x,\nabla y) \,dx,
\]
and denote
\[
\begin{split}
J_\e(y) &:= \bar{J}_\e(y) - \bar{J}_\e(\id)\\
	&= I_\e(y) - \e\int_{\pl \W} f\cdot (y-x)\, \,d\calH^{n-1} - \e \int_\W g\cdot (y-x)\, dx. 
\end{split}
\]

\section{Preliminary estimates}\label{sec:prel_est}

We begin with some preliminary calculations: In Lemma~\ref{lem:using_FJM} we show that if $J_\e(y_\e)\le C\e^2$, then the $W^{1,2}$-distance between $y_\e$ and its reference configuration is of order $\e$.
In Proposition~\ref{prop:inf_energy} we show that
\[
-C\e^2 \le \inf_{W^{1,2}} J_\e \le 0,
\] 
for some $C>0$ depending on the forces $f,g$ and the energy density $W$.
These motivate the study of the $\Gamma$-limit of $\frac{1}{\e^2} J_\e$.

In this section, we use the notation $A_\e\lesssim B_\e$ if $A_\e \le C B_\e$ for some constant $C>0$ that is independent of $\e$, but can depend on $\W$, the constant $c$ in the coercivity assumption \ref{itm:coercive}, and other fixed quantities.

\begin{lemma}
\label{lem:using_FJM}
If $J_\e(y_\e) \leq C\e^2$, then $I_\e(y_\e) = O(\e^2)$ and there exist a sequence $R_\e\in \SO{n}$ and constants $c_\e\in \R^n$ such that 
\[
\|y_\e - (R_\e x +c_\e) \|_{W^{1,2}}\lesssim \e.
\]
If $R_\e'\in \SO{n}$ is another sequence with respect to which this holds, then $|R_\e - R'_\e| \lesssim \e$.
\end{lemma}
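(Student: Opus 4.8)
The plan is to combine the Friesecke--James--M\"uller (FJM) rigidity theorem with the lower bound on the elastic energy density from coercivity \ref{itm:coercive}, and then a quantitative comparison to control the indeterminacy in $R_\e$. First I would establish that $I_\e(y_\e) = O(\e^2)$: from the definition $J_\e(y_\e) = I_\e(y_\e) - \e\int_{\pl\W} f\cdot(y_\e-x)\,d\calH^{n-1} - \e\int_\W g\cdot(y_\e-x)\,dx$, so $I_\e(y_\e) \le C\e^2 + \e\,|\langle\text{forces},y_\e-x\rangle|$. The force term is linear in $y_\e$; using the trace inequality and the equilibration \eqref{eq:total_force_2} (which lets us subtract an arbitrary constant from $y_\e - x$), one bounds it by $\e\,\|y_\e - x - c\|_{W^{1,2}}$ for a suitable $c$, hence by $C\e\,\|\nabla y_\e - I\|_{L^2} + (\text{lower order})$. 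The key point is that $\|\nabla y_\e - I\|_{L^2}$ is itself controlled by rigidity: $\dist^2(\nabla y_\e(x),\SO n)$ integrates, by \ref{itm:coercive}, to at most $c^{-1} I_\e(y_\e)$, and FJM \cite[Theorem 3.1]{FJM02b} then produces a single rotation $R_\e\in\SO n$ with $\|\nabla y_\e - R_\e\|_{L^2}^2 \lesssim I_\e(y_\e)$; since $|R_\e - I|$ can be bounded (after possibly translating) we get $\|\nabla y_\e - I\|_{L^2} \lesssim \sqrt{I_\e(y_\e)} + 1$ — but actually the cleaner route is to absorb: write $I_\e(y_\e) \le C\e^2 + C\e(\|\nabla y_\e - R_\e\|_{L^2} + |R_\e - I| + \|\text{const}\|)$ and note $\|\nabla y_\e - R_\e\|_{L^2}\lesssim \sqrt{I_\e(y_\e)}$, so $I_\e(y_\e) \le C\e^2 + C\e\sqrt{I_\e(y_\e)} + C\e$; since $J_\e(y_\e)$ bounded forces a priori $\sqrt{\e}$-type smallness, a Young-inequality absorption $C\e\sqrt{I_\e}\le \tfrac12 I_\e + C\e^2$ gives $I_\e(y_\e)\lesssim \e^2$, provided one first checks $I_\e(y_\e)\to 0$ so that $|R_\e - I|$ stays bounded. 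That last preliminary — that $I_\e(y_\e)$ is at least $o(1)$, hence $R_\e$ bounded, hence the force term genuinely lower order — is the technical care point, and I would handle it by a two-step bootstrap (first get $I_\e \lesssim \e$, then feed back to get $I_\e\lesssim\e^2$).

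Second, once $I_\e(y_\e)\lesssim \e^2$, apply FJM rigidity again to extract $R_\e\in\SO n$ with $\|\nabla y_\e - R_\e\|_{L^2(\W)}^2 \lesssim I_\e(y_\e) \lesssim \e^2$; then set $c_\e := \fint_\W (y_\e - R_\e x)\,dx$ and invoke the Poincar\'e--Wirtinger inequality on the Lipschitz domain $\W$ to upgrade the gradient estimate to the full $W^{1,2}$ estimate $\|y_\e - (R_\e x + c_\e)\|_{W^{1,2}(\W)} \lesssim \|\nabla y_\e - R_\e\|_{L^2(\W)} \lesssim \e$. This is routine.

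Third, for the uniqueness-up-to-$O(\e)$ statement: suppose $R_\e'\in\SO n$ and $c_\e'\in\R^n$ also satisfy $\|y_\e - (R_\e' x + c_\e')\|_{W^{1,2}}\lesssim\e$. Subtracting the two estimates via the triangle inequality gives $\|(R_\e - R_\e')x + (c_\e - c_\e')\|_{W^{1,2}(\W)} \lesssim \e$, in particular $\|(R_\e - R_\e')\nabla(x)\|_{L^2} = \|(R_\e-R_\e')\|\cdot\|I\|_{L^2(\W)}$-type bound, i.e.\ $|R_\e - R_\e'|\,|\W|^{1/2} \lesssim \e$, hence $|R_\e - R_\e'|\lesssim\e$. (One uses here that $x\mapsto (R_\e-R_\e')x$ has constant gradient $R_\e - R_\e'$, so its $L^2$ norm of the gradient is exactly $|R_\e - R_\e'|\,|\W|^{1/2}$, and $\W$ has positive measure.)

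The main obstacle I expect is the absorption argument in the first step: the force term is multiplied by only one power of $\e$, not $\e^2$, so one cannot naively bound $J_\e \ge I_\e - (\text{something}\cdot\e^2)$; one must genuinely use that a competitor-type bound forces $y_\e - x$ to be small in $W^{1,2}$ at scale $\sqrt\e$ before one can close the estimate at scale $\e$. Getting the bootstrap order right — and in particular verifying $R_\e$ stays in a fixed compact neighborhood of $I$ (using that $\id$ realizes $J_\e(\id)=0$ and that $F(R-I)\le 0$ on all of $\SO n$, so the normalization really does pin the rotational part near $I$) — is where the argument has to be careful rather than formal.
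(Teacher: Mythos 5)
You have the right skeleton (FJM rigidity for the gradient, Poincar\'e for the $W^{1,2}$ estimate, triangle inequality for the uniqueness of $R_\e$), and the second and third steps match the paper. But the first step, the bound $I_\e(y_\e)=O(\e^2)$, has a genuine gap: the bootstrap you propose does not close, and the ``pinning $R_\e$ near $I$'' you invoke at the end is false.

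Concretely, after splitting $y_\e - x = Y_\e + (R_\e - I)x + c_\e$ and using equilibration to kill the $c_\e$ term, the force contribution is
\[
\e\int_{\pl\W} f\cdot Y_\e\,d\calH^{n-1} + \e\int_\W g\cdot Y_\e\,dx \;+\; \e\, F(R_\e - I).
\]
The first two terms are $\lesssim \e\sqrt{I_\e(y_\e)}$ by FJM plus Poincar\'e/trace, and can be absorbed by Young's inequality --- no problem there. The difficulty is entirely in $\e\,F(R_\e - I)$. You estimate this by $C\e\,|R_\e-I|$, and $|R_\e - I|$ is only $O(1)$ (rotations are bounded, but not a priori close to $I$). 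That gives $I_\e \lesssim \e^2 + \e\sqrt{I_\e} + \e$, hence at best $I_\e\lesssim\e$; feeding $I_\e\lesssim\e$ back in does not improve the bound on $|R_\e - I|$, so the bootstrap stalls at $O(\e)$. Moreover, the claim that the normalization $I\in\calR$ ``really does pin the rotational part near $I$'' is not true: when $\calR$ is not a singleton (e.g.\ the uniform-compression or tangential-force examples in Section~\ref{sec:examples}, or whenever $F\equiv 0$), $R_\e$ can be far from $I$ at any energy level; the compactness theorem only gives $\dist(R_\e,\calR)\to 0$, and even that is proved \emph{after} this lemma.

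The correct (and simpler) observation is that $F(R_\e - I)$ has a definite \emph{sign}: since $I$ is normalized to be an optimal rotation, $F(R-I)\le 0$ for \emph{every} $R\in\SO{n}$ (this is exactly \eqref{eq:forces_new}). Hence $\e\,F(R_\e - I)\le 0$ and the term can simply be dropped, with no need to control $|R_\e - I|$ at all. This turns your would-be bootstrap into a one-pass absorption: $I_\e \le C\e^2 + C\e\sqrt{I_\e}$, which closes at $O(\e^2)$ by Young. So the missing idea is not a bootstrap but a sign: use the normalization $I\in\calR$ to discard the rotational force term outright, not to estimate it.
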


\begin{remark}
As we will show later, the fact that $|R_\e - R'_\e| \lesssim \e$ implies that we can regard any sequence $R_\e x+c_\e$ for which this lemma holds as reference configurations of the sequence $y_\e$, without changing the results of this paper.
\end{remark}

\begin{proof}
By the Friesecke-James-M\"uller rigidity theorem \cite[Theorem~3.1]{FJM02b}, the coercivity assumption \ref{itm:coercive} on $\calW$ implies that there exist $R_\e\in \SO{n}$ such that
\[
\|\nabla y_\e - R_\e\|_{L^2} \lesssim \brk{I_\e(y_\e)}^{1/2}.
\]
This also implies that, for an appropriate constant $c_\e$,
\[
\|Y_\e \|_{W^{1,2}}\lesssim \brk{I_\e(y_\e)}^{1/2},
\]
where $Y_\e:= y_\e - R_\e x - c_\e$. 
From the trace theorem, a similar bound also holds for $L^2$-norm of the trace of $Y_\e$.
Therefore, we only need to prove that $I_\e(y_\e) = O(\e^2)$.
Using the inequalities above, \eqref{eq:total_force_2} and \eqref{eq:forces_new}, we have
\[
\begin{split}
I_\e(y_\e) &= J_\e(y_\e) + \e\int_{\pl \W} f\cdot (y_\e-x)\, \,d\calH^{n-1} + \e \int_\W g\cdot (y_\e-x)\, dx \\
	&\le C\e^2 + \e\int_{\pl \W} f\cdot (y_\e-x)\, \,d\calH^{n-1} + \e \int_\W g\cdot (y_\e-x)\, dx \\
	&=C\e^2 + \e\int_{\pl \W} f\cdot Y_\e\, \,d\calH^{n-1} + \e \int_\W g\cdot Y_\e \, dx \\
		&\qquad\;\;\, + \e\int_{\pl \W} f\cdot (R_\e-I)x\, \,d\calH^{n-1} + \e \int_\W g\cdot (R_\e-I)x\, dx \\
	&\le C\e^2 + \e\int_{\pl \W} f\cdot Y_\e \, \,d\calH^{n-1} + \e \int_\W g\cdot Y_\e \, dx \\
	&\le C\e^2 + \e\|f\|_{L^2(\pl\W)} \|Y_\e\|_{L^2(\pl\W)} + \e \|g\|_{L^2(\W)} \|Y_\e\|_{L^2(\W)} \\
	&\lesssim \e^2 + \e\brk{\|f\|_{L^2(\pl\W)} + \|g\|_{L^2(\W)}} \brk{I_\e(y_\e)}^{1/2} \\
	&\le \e^2 + \frac{\e^2}{\delta^2}\brk{\|f\|_{L^2(\pl\W)}^2 + \|g\|_{L^2(\W)}^2} + \delta^2 I_\e(y_\e), \\
\end{split}
\]
which completes the proof by choosing $\delta$ small enough.

Finally, the last statement follows since
\[
|R_\e - R_\e'| \lesssim \|\nabla y_\e - R_\e\|_{L^2} + \|\nabla y_\e - R_\e'\|_{L^2} \lesssim \e.
\]
\end{proof}

\begin{proposition}\label{prop:inf_energy}
There exists $C>0$ such that
\[
-C\e^2 \le \inf J_\e \le 0.
\]
\end{proposition}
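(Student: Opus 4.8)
The upper bound is immediate and I would dispose of it first: since $\calW\ge 0$ and $\id\in W^{1,2}(\W;\R^n)$ is admissible, $J_\e(\id)=\bar J_\e(\id)-\bar J_\e(\id)=0$, hence $\inf J_\e\le 0$.

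For the lower bound the plan is to rerun the estimate from the proof of Lemma~\ref{lem:using_FJM}, but \emph{uniformly in $y$} instead of only along an a priori low-energy sequence. Fix an arbitrary $y\in W^{1,2}(\W;\R^n)$. If $I_\e(y)=+\infty$ there is nothing to prove, since the two force terms in $J_\e(y)$ are finite (by the trace theorem, Cauchy--Schwarz, and $f\in L^2(\pl\W)$, $g\in L^2(\W)$) while $I_\e(y)=+\infty$; so I may assume $I_\e(y)<\infty$. By the Friesecke--James--M\"uller rigidity theorem and coercivity \ref{itm:coercive}, there exist $R\in\SO n$ and $c\in\R^n$ such that, with $Y:=y-Rx-c$,
\[
\|Y\|_{W^{1,2}(\W)}\lesssim\brk{I_\e(y)}^{1/2},\qquad \|Y\|_{L^2(\pl\W)}\lesssim\brk{I_\e(y)}^{1/2},
\]
where the implicit constants depend only on $\W$ and the coercivity constant $c$ --- crucially, \emph{not} on $y$ or $\e$.

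Next I would substitute $y-x=Y+(R-I)x+c$ into the definition of $J_\e$. The $c$-contribution vanishes by the equilibrium identity \eqref{eq:total_force_2}, and the $(R-I)x$-contribution equals $F(R-I)\le 0$ by \eqref{eq:forces_new}; therefore
\[
J_\e(y)\;\ge\; I_\e(y)-\e\int_{\pl\W}f\cdot Y\,d\calH^{n-1}-\e\int_\W g\cdot Y\,dx\;\ge\; I_\e(y)-C_1\,\e\,\brk{I_\e(y)}^{1/2},
\]
with $C_1$ depending only on $\|f\|_{L^2(\pl\W)}$, $\|g\|_{L^2(\W)}$, $\W$ and $c$. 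Finally, writing $t:=\brk{I_\e(y)}^{1/2}\ge 0$ and using the Young inequality $C_1\e t\le t^2+\tfrac14 C_1^2\e^2$, the right-hand side is bounded below by $-\tfrac14 C_1^2\e^2$. Since this is independent of $y$, taking the infimum gives $\inf J_\e\ge -C\e^2$ with $C=\tfrac14C_1^2$.

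I do not expect a genuine obstacle here; the computation is essentially the one already carried out in Lemma~\ref{lem:using_FJM}. The one point that needs care is that all constants --- above all the rigidity constant --- are genuinely uniform in $y$, so that one bounds $\inf J_\e$ rather than merely the energy along a chosen sequence. The structural difference from Lemma~\ref{lem:using_FJM} is that there the hypothesis $J_\e(y_\e)\le C\e^2$ was first used to bootstrap $I_\e(y_\e)=O(\e^2)$, whereas here $I_\e(y)$ is simply absorbed (via Young's inequality) into the positive leading term, which is exactly what makes the argument valid for every $y$ simultaneously.
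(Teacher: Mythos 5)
Your proof is correct, and it reorganizes the paper's argument rather than simply reproducing it. The paper first proves Lemma~\ref{lem:using_FJM} under the hypothesis $J_\e(y_\e)\le C\e^2$, bootstrapping $I_\e(y_\e)=O(\e^2)$ and hence $\|Y_\e\|_{W^{1,2}}=O(\e)$, and then obtains the lower bound by applying that lemma along a sequence of approximate minimizers and \emph{discarding} the nonnegative elastic term $I_\e(y_\e)\ge 0$. You instead keep $I_\e(y)$ as the positive leading term, obtain $\|Y\|_{W^{1,2}}\lesssim (I_\e(y))^{1/2}$ for an \emph{arbitrary} $y$ directly from FJM rigidity plus coercivity, and absorb the linear force contribution into $I_\e(y)$ via Young's inequality. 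The two arguments rely on exactly the same ingredients (rigidity, trace theorem, equilibrium \eqref{eq:total_force_2}, and optimality $F(R-I)\le0$); the difference is purely in how the absorption is done. What your version buys is a pointwise bound $J_\e(y)\ge -\tfrac14 C_1^2\e^2$ valid for every $y$, so you never need to invoke ``approximate minimizers'' and in particular you sidestep the (admittedly minor, and easily repaired) issue that the paper's phrasing presupposes $\inf J_\e>-\infty$ in order to even speak of a sequence with $J_\e(y_\e)-\inf J_\e\le C'\e^2$. Your handling of the case $I_\e(y)=+\infty$ is also the right thing to say: the two force terms are finite by the trace theorem and Cauchy--Schwarz, so $J_\e(y)=+\infty$ there and the bound is trivial.
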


\begin{proof}
The upper bound follows since $J_\e(\id) = 0$.
For the lower bound, consider a sequence of approximate minimizers $y_\e$, that is
\[
J_\e(y_\e) - \inf J_\e \le C'\e^2,
\]
for some $C'>0$.
In particular, $J_\e(y_\e) \le C'\e^2$, hence the results of Lemma~\ref{lem:using_FJM} hold.
We therefore have
\[
\begin{split}
J_\e(y_\e) &\ge -\e\int_{\pl \W} f\cdot (y_\e-x)\, \,d\calH^{n-1} - \e \int_\W g\cdot (y_\e-x)\, dx \\
	&\ge -\e\int_{\pl \W} f\cdot Y_\e \, \,d\calH^{n-1} - \e \int_\W g\cdot Y_\e \, dx - \e\int_{\pl \W} f\cdot (R_\e-I)x\, \,d\calH^{n-1} - \e \int_\W g\cdot (R_\e-I)x\, dx \\
	&\ge -\e\int_{\pl \W} f\cdot Y_\e \, \,d\calH^{n-1} - \e \int_\W g\cdot Y_\e \, dx \\
	&\ge -\e\|f\|_{L^2(\pl\W)} \|Y_\e \|_{L^2(\pl\W)} - \e \|g\|_{L^2(\W)}\|Y_\e \|_{L^2(\W)} \ge -C\e^2,
\end{split}
\]
for some constant $C>0$.
\end{proof}

\section{Geometry of the set of optimal rotations $\calR$}\label{sec:geometry_calR}

We recall that the tangent space to $\SO{n}$ at the identity is the space of skew-symmetric matrices, and at $R\in\SO{n}$ it is 
$\{RW ~:~ W\in \mathbb{M}^{n\times n}_{\text{skew}}\}$.
Moreover, for a fixed $R$, we have that $\SO{n} = \{Re^W ~:~ W\in \mathbb{M}^{n\times n}_{\text{skew}}\}$, and for every $R'\in \SO{n}$, there exists $W\in \mathbb{M}^{n\times n}_{\text{skew}}$ such that $R' = Re^W$ and the map $t\in[0,1]\mapsto Re^{tW}$ is a minimizing geodesic in $\SO{n}$ connecting $R$ and $R'$.

Let now $R\in \calR$ and $W\in \mathbb{M}^{n\times n}_{\text{skew}}$. From the definition of $\calR$ the function $\phi(t):=F(Re^{tW})$ satisfies $\phi'(0)=0$ and $\phi''(0)\leq0$. Thus, we deduce that
\beq\label{eq:forces_EL_new}
F(RW) = 0, \qquad
F(RW^2)  \le 0,
\eeq
for every $W\in \mathbb{M}^{n\times n}_{\text{skew}}$ and $R\in \calR$. We note that the first equation in \eqref{eq:forces_EL_new} for $R=I$, together with \eqref{eq:total_force_2}, provides the usual balance condition in linearized elasticity:
$$
\int_{\pl \W} f\cdot (W x+c) \, \,d\calH^{n-1} + \int_\W g\cdot (W x+c)\, dx = 0
$$
for every $W\in \mathbb{M}^{n\times n}_{\text{skew}}$ and $c\in\R^n$.

Our main result of this section is the following characterization of the set of optimal rotations:
\begin{proposition}\label{pn:calR_manifold}
$\calR$ is a closed, connected, boundryless, totally-geodesic submanifold of $\SO{n}$, and the tangent space of $\calR$ at $R_0$ is
\beq\label{eq:TR_R_0}
T\calR_{R_0} = \BRK{R_0 W ~:~ W\in \mathbb{M}^{n\times n}_{\text{skew}}, \quad F(R_0 W^2) = 0}.
\eeq
In particular, $T\calR_{R_0}$ is a linear space.
\end{proposition}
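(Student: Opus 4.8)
The plan is to show that $\calR$ is the zero set of a real-analytic function on the compact analytic manifold $\SO n$ and exploit the homogeneity of $F$ to pin down its tangent space. Concretely, $\calR = \{R \in \SO n : F(R) = \max_{\SO n} F\}$, and $R \mapsto F(R)$ is the restriction to $\SO n$ of a linear (hence real-analytic) functional; therefore $\calR$ is a real-analytic subvariety of $\SO n$, in particular closed. Since $F(R) = 0$ for $R$ on geodesics emanating from any $R_0 \in \calR$ up to second order (equation \eqref{eq:forces_EL_new}), I would first establish that the geodesic $t \mapsto R_0 e^{tW}$ lies entirely in $\calR$ whenever $F(R_0 W^2) = 0$. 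This is the crucial algebraic step: expanding $\phi(t) := F(R_0 e^{tW})$ and using $e^{tW} = \sum_k t^k W^k / k!$ together with $F$ linear, one gets $\phi(t) = \sum_k \tfrac{t^k}{k!} F(R_0 W^k)$, and I want to argue that $F(R_0 W^k) = 0$ for all $k \ge 1$ once $F(R_0 W) = 0$ and $F(R_0 W^2) = 0$.

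The mechanism for the higher-order vanishing is that $e^{sW} R_0^T \cdot R_0 = R_0 e^{sW'}$-type manipulations (or more directly: for each fixed $s$, $R_0 e^{sW} \in \SO n$, so by optimality $F(R_0 e^{sW}) \le F(R_0)$, and by \eqref{eq:forces_EL_new} applied at the point $R_0 e^{sW}$ — once we know it is in $\calR$ — we get $F(R_0 e^{sW} W) = 0$). The clean way: show $\calR$ is geodesically convex by a maximum-principle argument. If $R_0, R_1 \in \calR$ and $\gamma(t) = R_0 e^{tW}$ is the minimizing geodesic between them, then $\phi(t) = F(\gamma(t))$ is smooth with $\phi(0) = \phi(1) = \max F$; I would compute $\phi''(t) = F(R_0 e^{tW} W^2) = F(\gamma(t) W^2) \le 0$ for all $t$, because \eqref{eq:forces_EL_new} holds at every point of $\calR$ — but a priori we only know $\gamma(0), \gamma(1) \in \calR$. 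Instead, note $\phi''(t) = F(R_0 e^{tW} W^2)$ regardless, and we must show this is $\le 0$. Here I would invoke that $\phi(t) \le \max F = \phi(0) = \phi(1)$ only at endpoints; but a concave-looking obstruction: actually $\phi$ need not be concave globally. The honest route is: since $\phi(t) \le \phi(0)$ with equality at $t = 0$, we get $\phi''(0) \le 0$, i.e. $F(R_0 W^2) \le 0$; if moreover $F(R_0 W^2) = 0$ then $t = 0$ is a degenerate max, and I'd use analyticity of $\phi$: $\phi$ analytic with a maximum at $0$ and $\phi''(0) = 0$ forces, by considering the first nonvanishing derivative, that either $\phi \equiv \phi(0)$ near $0$ (done, the geodesic stays in $\calR$ locally, then extend) or the first nonzero derivative $\phi^{(k)}(0)$ has even order $k$ with $\phi^{(k)}(0) < 0$. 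I would rule out the latter by a symmetry/reversal argument: replacing $W$ by $-W$ reverses the geodesic, and $\phi^{(k)}(0)$ for the reversed geodesic is $(-1)^k \phi^{(k)}(0)$; combined with $F(R_0(-W)^2) = F(R_0 W^2) = 0$ and the requirement that $-W$ also gives a max at $0$, one forces all odd derivatives to vanish and all even ones to be $\le 0$ — and then a bootstrap on $F(R_0 W^{2m})$ via $\phi''(t) = F(\gamma(t)W^2)$ being analytic and $\le 0$ with a zero at $t=0$.

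Granting the geodesic-convexity, the rest is standard: a closed, geodesically convex subset of a complete Riemannian manifold that is locally a totally geodesic submanifold is itself a totally geodesic submanifold — I would cite or quickly reprove that geodesic convexity plus the fact that through each pair of nearby points of $\calR$ the connecting geodesic lies in $\calR$ gives a submanifold structure, with the tangent space at $R_0$ being exactly the set of initial velocities $R_0 W$ of geodesics staying in $\calR$, which by the above is $\{R_0 W : F(R_0 W^2) = 0\}$. Totally-geodesic is immediate from geodesic convexity. Connectedness follows because $\calR$ is geodesically convex hence path-connected; boundarylessness is automatic for a submanifold-without-boundary characterization (it is cut out as a level set, no boundary appears). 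Finally, for linearity of $T\calR_{R_0}$: the set $\{W \in \mathbb{M}^{n\times n}_{\text{skew}} : F(R_0 W^2) = 0\}$ is linear because, expanding $F(R_0(W_1 + W_2)^2) = F(R_0 W_1^2) + F(R_0 W_2^2) + F(R_0(W_1 W_2 + W_2 W_1))$, the cross term $F(R_0(W_1W_2 + W_2W_1))$ vanishes — this is a polarization identity coming from $F(R_0 W^2) \le 0$ being a nonpositive quadratic form in $W$ whose null space is therefore a linear subspace (a nonpositive-semidefinite quadratic form's zero set is its kernel, a subspace).

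The main obstacle I anticipate is precisely the global geodesic-convexity step — promoting the second-order condition $\phi''(0) \le 0$ and the degeneracy $F(R_0 W^2) = 0$ to the conclusion that the whole geodesic $R_0 e^{tW}$ lies in $\calR$. The slick resolution is to observe that $\psi(t) := F(R_0 e^{tW} W^2)$ is real-analytic, equals $\phi''(t)$, satisfies $\psi(0) = 0$, and — by applying \eqref{eq:forces_EL_new} at points of $\calR$ near $R_0$ (which exist since we first get a \emph{local} statement from analyticity of $\phi$ at a degenerate max) — is $\le 0$ on an interval; an analytic function that is $\le 0$ on an interval and vanishes at an interior point is either identically zero there or has that point as a strict interior max with a negative even-order leading term, and the latter is excluded by the $W \mapsto -W$ symmetry. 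Hence $\psi \equiv 0$, so $\phi'' \equiv 0$, so $\phi$ is affine and bounded, hence constant, so $R_0 e^{tW} \in \calR$ for all $t$. I expect this to be the place where the write-up needs the most care, and where appealing to real-analyticity of $F|_{\SO n}$ is what makes everything go through cleanly.
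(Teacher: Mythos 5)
Your plan correctly identifies the crux --- showing that the geodesic $t\mapsto R_0e^{tW}$ stays in $\calR$ whenever $F(R_0W^2)=0$ (this is the paper's Lemma~\ref{lem:F_W_squared_zero}) --- but the mechanism you propose does not actually close. Two independent problems. First, the $W\mapsto -W$ symmetry does not exclude a strict even-order maximum: if $\phi(t)=F(R_0e^{tW})$ has $\phi(t)=\phi(0)+ct^{2m}+O(t^{2m+1})$ with $c<0$, then the reversed geodesic gives $\tilde\phi(t)=\phi(-t)$, whose leading coefficient is $(-1)^{2m}c=c$ again; both are perfectly consistent with a strict degenerate max at $0$, so the symmetry rules out nothing. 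Second, your ``slick resolution'' needs $\psi(t)=F(R_0e^{tW}W^2)\le 0$ on an interval around $0$, which you justify by applying \eqref{eq:forces_EL_new} at points $R_0e^{tW}\in\calR$ near $R_0$, and you obtain the existence of such points from ``analyticity of $\phi$ at a degenerate max.'' But a degenerate max does not mean locally constant --- $\phi(t)=\phi(0)-t^4$ is a counterexample --- so you have no nearby points of $\calR$ on the geodesic, and the step is circular.

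What you are missing is the structural inequality \eqref{eq:FDi}: writing $W=R^T\Sigma R$ in the canonical block form \eqref{eq:skew_canonical_form}, the coefficients $a_i:=F(R_0R^TD_iR)$ are \emph{nonnegative}, a fact the paper establishes separately by optimality of $R_0$ along the single-block rotations $e^{tR^T\mathrm{diag}(A(1),0,\ldots)R}$. With this in hand, $\phi(t)=F(R_0)+\sum_i a_i(\cos(\lambda_it)-1)$, and $F(R_0W^2)=-\sum_i\lambda_i^2a_i=0$ together with $a_i\ge 0$ and $\lambda_i\ne 0$ force every $a_i=0$, so $\phi\equiv F(R_0)$. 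This is not a generic real-analyticity phenomenon; it uses the explicit trigonometric form of $\phi$ and the sign constraint \eqref{eq:FDi}. You should also prove the companion Lemma~\ref{lem:calR_geodesics} (connecting two points of $\calR$ by a geodesic inside $\calR$), which requires choosing $W$ with eigenvalues in $(-\pi,\pi]$ so that $\cos(\lambda_i)-1<0$ strictly. On the plus side, your polarization observation --- that $W\mapsto F(R_0W^2)$ is a nonpositive quadratic form on $\mathbb{M}^{n\times n}_{\text{skew}}$, so its zero set is automatically the kernel of the associated bilinear form, hence a linear subspace --- is correct and is cleaner than the paper's midpoint/zoom-in argument for the linearity of $T\calR_{R_0}$; it would make a nice simplification once the geodesic lemma is in place.
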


Recall that a totally-geodesic submanifold $\mathcal{M}$ of a manifold $\mathcal{N}$ is a submanifold, such that a length-minimizing curve in $\mathcal{M}$ between any two elements in $\mathcal{M}$ is also a length-minimizing curve in $\mathcal{N}$ (e.g., a hyperplane in Euclidean space).

\begin{corollary}\label{cor:compatibility}
An immediate corollary is that strict inequality in \eqref{eq:forces_EL_new} is equivalent to saying that $\calR$ is a singleton, i.e., $\calR=\{I\}$.
This strict inequality is the \emph{compatibility} assumption on the forces in \cite{MPT19} (see (2.25) there).
\end{corollary}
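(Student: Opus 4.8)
The plan is to read this off directly from the tangent space formula \eqref{eq:TR_R_0} of Proposition~\ref{pn:calR_manifold}, evaluated at $R_0 = I$. Recall that by our normalization $I \in \calR$, and that \eqref{eq:forces_EL_new} already supplies $F(W^2) \le 0$ for every $W \in \mathbb{M}^{n\times n}_{\text{skew}}$; thus ``strict inequality in \eqref{eq:forces_EL_new}'' means $F(W^2) < 0$ for every nonzero skew $W$. Note also that, since $I \in \calR$, this is precisely the case $R = I$ of the general strict inequality $F(RW^2) < 0$ (for $R \in \calR$ and $W \ne 0$ skew), and—as the argument below shows—that single case already pins down $\calR$, so the two possible readings of the phrase coincide.

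First I would assume $F(W^2) < 0$ for every nonzero skew $W$. Then $\{W \in \mathbb{M}^{n\times n}_{\text{skew}} : F(W^2) = 0\} = \{0\}$, so \eqref{eq:TR_R_0} at $R_0 = I$ gives $T\calR_I = \{0\}$. By Proposition~\ref{pn:calR_manifold}, $\calR$ is a connected submanifold of $\SO{n}$, and a connected manifold with a zero-dimensional tangent space is a single point; since $I \in \calR$, this forces $\calR = \{I\}$. Conversely, if $\calR = \{I\}$, then $\calR$ is zero-dimensional, hence $T\calR_I = \{0\}$, and \eqref{eq:TR_R_0} forces every skew $W$ with $F(W^2) = 0$ to vanish; combined with $F(W^2) \le 0$ from \eqref{eq:forces_EL_new}, this is exactly the statement that $F(W^2) < 0$ for all nonzero skew $W$.

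There is no genuine obstacle here: the content of the corollary is entirely absorbed into Proposition~\ref{pn:calR_manifold}, and what remains is the unpacking above together with the elementary fact that a connected manifold with trivial tangent space at a point is that single point. The only items that deserve a careful sentence are the precise meaning of ``strict inequality in \eqref{eq:forces_EL_new}'' (the second relation, at $R = I$) and the identification of this condition with the compatibility assumption (2.25) of \cite{MPT19}, which in their normalization—where the preferred rotation is $I$—reads literally $F(W^2) < 0$ for every nonzero skew $W$.
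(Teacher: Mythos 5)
Your proposal is correct and follows exactly the route the paper intends: the corollary is stated as an immediate consequence of Proposition~\ref{pn:calR_manifold}, and your unpacking via the tangent-space formula \eqref{eq:TR_R_0} at $R_0=I$ (trivial tangent space of a connected manifold $\Leftrightarrow$ singleton), together with the sign information $F(W^2)\le 0$ from \eqref{eq:forces_EL_new}, is precisely that argument made explicit. Your remark that the two readings of ``strict inequality'' (at $R=I$ only, or for all $R\in\calR$) coincide is a correct and worthwhile clarification.
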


Proposition~\ref{pn:calR_manifold} is what we need for the compactness and $\Gamma$-convergence results.
Later on, in Section~\ref{sec:examples}, we give more details on the structure of $\calR$; 
in particular, we show that the second fundamental form of $\SO{n}$ in $\mathbb{M}^{n\times n}$ in the direction $F$ is negative semi-definite, and that the number of its zero principal curvatures corresponds to the dimension of $\calR$. 
This yields a complete classification of the possible optimal rotations in two and three dimensions.

We will prove Proposition~\ref{pn:calR_manifold} at the end of the section, after a few preliminaries.
For later use, we denote
\beq\label{eq:normal_space}
N \calR_{R_0} := \BRK{W\in \mathbb{M}^{n\times n}_{\text{skew}} ~:~ R_0 W \perp T\calR_{R_0}}.
\eeq
Note that $R_0 N\calR_{R_0}$ is the normal space of $T\calR_{R_0}$ in $T_{R_0} \SO{n}$.
Also, we define the projection operator
\beq\label{eq:projection}
\calP: \SO{n} \to \calR, \qquad \calP(Q) := \argmin \BRK{\dist_{\SO{n}}(Q,R) : R\in \calR }.
\eeq
Since $\calR$ is a closed submanifold, $\calP$ is well-defined in a neighborhood of $\calR$.
Here, $\dist_{\SO{n}}$ is the intrinsic distance in the manifold $\SO{n}$; that is,
\[
\dist_{\SO{n}}(Q,R) = \min \BRK{ |W| ~:~ W\in \mathbb{M}^{n\times n}_{\text{skew}}, Q = R e^W }.
\]
Note that this distance is equivalent to the regular (Frobenius) distance in $\mathbb{M}^{n\times n}$ (since $\SO{n}$ is a compact submanifold), and moreover,
\beq\label{eq:equiv_dist}
\dist_{\SO{n}}(Q,R) = |Q-R| + O(|Q-R|^2).
\eeq

Towards the proof of Proposition~\ref{pn:calR_manifold}, we start by recalling a few linear algebra facts:
any $W\in \mathbb{M}^{n\times n}_{\text{skew}}$ can be written as $R^T\Sigma R$, where $R\in \SO{n}$ and 
\beq\label{eq:skew_canonical_form}
\Sigma = \diag\brk{A(\lambda_1), A(\lambda_2),\ldots,A(\lambda_k),0,\ldots,0}, \quad 
A(\lambda) =\brk{\begin{matrix} 0 & \lambda \\ -\lambda & 0 \end{matrix} }, \quad \lambda_i \in \R\setminus \{0\}.
\eeq
From this, we have the following:
\begin{lemma}\label{lem:geodesics_SO}
Given a rotation $R\in \SO{n}$, any rotation $R'\in \SO{n}$ can be written as $R' = Re^W$, where $W\in \mathbb{M}^{n\times n}_{\text{skew}}$ and the values $\lambda_1,\ldots,\lambda_k$ in the representation \eqref{eq:skew_canonical_form} of $W$ belong to the interval $(-\pi,\pi]$.
\end{lemma}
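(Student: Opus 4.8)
The plan is to reduce the assertion to the fact that the exponential map $\exp\colon \mathbb{M}^{n\times n}_{\text{skew}}\to\SO{n}$ is onto, \emph{together with} a control on the canonical angles of the preimage. Set $Q:=R^TR'\in\SO{n}$. It then suffices to produce a skew-symmetric $W$ with $e^W=Q$ and with all the values $\lambda_1,\dots,\lambda_k$ appearing in the representation \eqref{eq:skew_canonical_form} of $W$ contained in $(-\pi,\pi]$: indeed $R'=RQ=Re^W$ is then the desired expression.

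To construct such a $W$, I would invoke the real normal form of an orthogonal matrix. There exists $P\in\SO{n}$ such that
\[
Q=P^T D P,\qquad D=\diag\brk{e^{A(\theta_1)},\dots,e^{A(\theta_k)},1,\dots,1},\qquad \theta_1,\dots,\theta_k\in(-\pi,\pi],
\]
with $A(\cdot)$ as in \eqref{eq:skew_canonical_form}; recall that $e^{A(\theta)}$ is a rotation of the plane by the angle $\theta$ (in the appropriate orientation). Here one uses the spectral structure of $Q$: each conjugate pair of non-real eigenvalues $e^{\pm i\theta}$ on a real two-dimensional invariant subspace produces a block $e^{A(\theta)}$ with $\theta\in(-\pi,\pi)$ (the sign chosen to match the convention for $A(\theta)$); the eigenvalue $-1$, which has even multiplicity because $\det Q=1$, is grouped into blocks $e^{A(\pi)}=-I_2$; and the eigenvalue $1$ contributes the trailing identity block. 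Setting $\Sigma:=\diag(A(\theta_1),\dots,A(\theta_k),0,\dots,0)$ we have $D=e^\Sigma$, hence
\[
Q=P^Te^\Sigma P=e^{P^T\Sigma P}.
\]
Thus $W:=P^T\Sigma P$ is skew-symmetric, satisfies $e^W=Q$, and, since $P\in\SO{n}$, is already written in the form \eqref{eq:skew_canonical_form} with conjugating rotation $P$ and with canonical angles $\lambda_i=\theta_i\in(-\pi,\pi]$. This proves the lemma.

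The only point that is not entirely routine is the existence of the real normal form $Q=P^TDP$ with the conjugator $P$ lying in $\SO{n}$ (rather than merely in $O(n)$) and with the block structure above — in particular the parity argument, coming from $\det Q=1$, that allows the $(-1)$-eigenspace to be packed into $2\times2$ rotation-by-$\pi$ blocks, and the reduction of $P$ to $\SO{n}$ (achieved, if necessary, by changing the sign of one row of $P$ and correspondingly replacing one $\theta_i$ by $-\theta_i$, which leaves the block form intact). Everything else is standard linear algebra. An alternative, less self-contained, route would be to cite surjectivity of $\exp$ for the compact connected group $\SO{n}$ and observe that a norm-minimizing preimage of $Q$ automatically has canonical angles in $[-\pi,\pi]$; I would favor the normal-form argument above since it yields the bound directly.
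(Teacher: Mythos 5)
Your argument is correct and reaches the same conclusion by a genuinely different route. The paper takes surjectivity of $\exp\colon\mathbb{M}^{n\times n}_{\text{skew}}\to\SO{n}$ as given, picks an arbitrary skew-symmetric $W$ with $e^W = R^T R'$, and derives from the canonical form \eqref{eq:skew_canonical_form} the explicit identities
\[
\cosh W = I + \sum_{i}(\cos\lambda_i - 1)\, R^T D_i R,\qquad \sinh W = \sum_{i}\sin(\lambda_i)\, R^T E_i R,
\]
which show that shifting each $\lambda_i$ by a multiple of $2\pi$ leaves $e^W = \cosh W + \sinh W$ unchanged; the angles can therefore be normalised into $(-\pi,\pi]$ after the fact. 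You instead start from the real normal form of $Q := R^T R' \in \SO{n}$ and \emph{construct} $W$ directly, which is more self-contained since it does not presuppose surjectivity of $\exp$. What the paper's route buys is that the two trigonometric identities proved inside this lemma are immediately reused --- they drive the inequality \eqref{eq:FDi} and Lemma~\ref{lem:F_W_squared_zero} --- so the derivation is not wasted. One small gap in your normal-form argument: when you fix $\det P$ from $-1$ to $+1$ by flipping one row of $P$, the accompanying replacement $\theta_i\mapsto -\theta_i$ can send $\pi$ to $-\pi\notin(-\pi,\pi]$. To avoid this, flip a row inside the trailing $+1$-block when it is nonempty; otherwise flip a row inside a block with $\theta_i\neq\pi$; and in the sole remaining case, namely $Q=-I_n$ with $n$ even, the normal form already holds with $P=I$, so no sign adjustment is needed at all.
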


\begin{proof}
We prove for the case $R=I$, that is,
that for each $W\in \mathbb{M}^{n\times n}_{\text{skew}}$ there exists $W'\in \mathbb{M}^{n\times n}_{\text{skew}}$ such that $e^W=e^{W'}$, and whose non-zero eigenvalues $\{\pm \lambda_i i\}_{i=1}^k$ satisfy $\lambda_i \in (-\pi,\pi]$.
For a general $R$ the result follow by multiplying everything from the left by $R$.
First, note that \eqref{eq:skew_canonical_form} implies that 
\beq\label{eq:exp_W}
\begin{split}
\cosh(W) &= I + \sum_{i=1}^k (\cos(\lambda_i)-1) R^T D_i R, \\
\sinh(W) &= \sum_{i=1}^k \sin(\lambda_i) R^T E_i R,
\end{split}
\eeq
where $\lambda_i$ and $R\in \SO{n}$ are as in \eqref{eq:skew_canonical_form}, and 
\[
(D_i)_{\alpha\beta} =
\begin{cases}
1 & \alpha=\beta=2i-1,2i, \\
0 & \text{otherwise.}
\end{cases}
\qquad
(E_i)_{\alpha\beta} =
\begin{cases}
1 	& \alpha=2i-1,\,\beta=2i, \\
-1	& \alpha=2i,\,\beta=2i-1,  \\
0 	& \text{otherwise.}
\end{cases}
\]
We note that $e^W = \cosh(W) + \sinh(W)$, and this is exactly the decomposition of $e^W$ into a symmetric ($\cosh(W)$) and a skew-symmetric ($\sinh(W)$) matrices.
Formulae \eqref{eq:exp_W} imply, in particular, that if 
\[
W' = R^T\diag\brk{A(\lambda_1'), A(\lambda_2'),\ldots,A(\lambda_k'),0,\ldots,0}R,
\]
where $\lambda_i' - \lambda_i \in 2\pi\bbZ$ for every $i$, then $e^W = e^{W'}$.
Thus it is possible to choose the $\lambda_i$s in any interval of length $2\pi$.
This completes the proof.
\end{proof}

Next, we note that for every $R_0\in\calR$, $R\in \SO{n}$ and $i$,
\beq\label{eq:FDi}
F(R_0R^T D_i R) \ge 0.
\eeq
Assume otherwise; without loss of generality, assume that
\[
F(R_0R^T D_1 R) = a < 0.
\]
Now, consider the matrix
\[
W = R^T \diag\brk{A(1), 0,\ldots,0} R \in \mathbb{M}^{n\times n}_{\text{skew}}.
\]
We have
\[
\cosh(tW) = I + (\cos(t)-1) R^T D_1 R,
\]
hence, for every $t\in (0,2\pi)$, using that $\sinh(tW)\in \mathbb{M}^{n\times n}_{\text{skew}}$ and thus, $F(R_0\sinh(tW))=0$ by \eqref{eq:forces_EL_new},
\[
F(R_0 e^{tW}) = F(R_0) + a(\cos(t)-1) > F(R_0),
\]
which is a contradiction to $R_0\in\calR$.

Now we can easily prove the following two Lemmas, that are the main building blocks towards Proposition~\ref{pn:calR_manifold}.
Lemma~\ref{lem:F_W_squared_zero} states that for any $W\in T\calR_{R_0}$ (see \eqref{eq:TR_R_0}), the whole $\SO{n}$-geodesic emanating from $R_0$ in direction $W$ belongs to $\calR$;
Lemma~\ref{lem:calR_geodesics} states that for any two elements $R_0,R_1\in \calR$, there exists a geodesic between them that belongs to $\calR$.

\begin{lemma}\label{lem:F_W_squared_zero}
If $R_0\in \calR$ and $W\in \mathbb{M}^{n\times n}_{\text{skew}}$ such that $F(R_0W^2) = 0$, then $R_0e^{tW} \in \calR$ for any $t\in \R$.
\end{lemma}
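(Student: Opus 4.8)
The plan is to compute the scalar function $\phi(t):=F(R_0 e^{tW})$ explicitly using the canonical form of $W$ and to show that it is constant. First I would write $W = R^T\Sigma R$ as in \eqref{eq:skew_canonical_form}, with parameters $\lambda_1,\dots,\lambda_k \in \R\setminus\{0\}$; since $tW = R^T(t\Sigma)R$ has parameters $t\lambda_i$, the identities \eqref{eq:exp_W} (applied to $tW$) give
$$
e^{tW} = \cosh(tW) + \sinh(tW) = I + \sum_{i=1}^k \big(\cos(t\lambda_i)-1\big)\,R^T D_i R + \sum_{i=1}^k \sin(t\lambda_i)\,R^T E_i R .
$$
Each matrix $R^T E_i R$ is skew-symmetric, so the first relation in \eqref{eq:forces_EL_new} makes the $\sinh$ contribution vanish under $F$, and by linearity of $F$,
$$
\phi(t) = F(R_0) + \sum_{i=1}^k \big(\cos(t\lambda_i)-1\big)\, F\big(R_0 R^T D_i R\big).
$$

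Next I would exploit the hypothesis $F(R_0 W^2)=0$. From \eqref{eq:skew_canonical_form} one has $\Sigma^2 = \diag(-\lambda_1^2 I_2,\dots,-\lambda_k^2 I_2,0,\dots,0)$, hence $W^2 = R^T\Sigma^2 R = -\sum_{i=1}^k \lambda_i^2\, R^T D_i R$, and therefore
$$
0 = F(R_0 W^2) = -\sum_{i=1}^k \lambda_i^2\, F\big(R_0 R^T D_i R\big).
$$
By \eqref{eq:FDi} each term $F(R_0 R^T D_i R)$ is nonnegative, and $\lambda_i\neq 0$, so every summand $\lambda_i^2\, F(R_0 R^T D_i R)$ is nonnegative; a sum of nonnegative numbers vanishing forces $F(R_0 R^T D_i R)=0$ for every $i$. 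Substituting this into the formula for $\phi(t)$ gives $\phi(t)=F(R_0)$ for all $t\in\R$. Since $R_0\in\calR$ means $F(R_0) = \max_{\SO{n}} F$, we conclude $R_0 e^{tW}\in\calR$ for every $t$, which is the claim.

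I do not expect a genuine obstacle here: the argument is a direct computation once the spectral normal form \eqref{eq:skew_canonical_form} and the identities \eqref{eq:exp_W} are available. The only point that needs a little care is to invoke the two facts about $F$ in the right places — the vanishing of $F$ on skew directions through $R_0$ (first equation of \eqref{eq:forces_EL_new}) to kill the $\sinh$-part of $e^{tW}$, and the one-sided bound \eqref{eq:FDi} on the $D_i$-directions, which is exactly what upgrades the single scalar equation $F(R_0 W^2)=0$ into the $k$ separate identities $F(R_0 R^T D_i R)=0$ that pin $\phi$ down to the constant $F(R_0)$.
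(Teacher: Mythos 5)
Your proof is correct and essentially identical to the paper's: both write $W$ in the canonical form \eqref{eq:skew_canonical_form}, use \eqref{eq:FDi} together with the hypothesis $F(R_0W^2)=0$ to conclude $F(R_0R^TD_iR)=0$ for each $i$, and then evaluate $F(R_0e^{tW})$ via \eqref{eq:exp_W} to find it is constantly $F(R_0)$. You make explicit the use of the first equation of \eqref{eq:forces_EL_new} to discard the $\sinh$ contribution, which the paper leaves implicit, but the argument is the same.
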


\begin{proof}
Let $W\in \mathbb{M}^{n\times n}_{\text{skew}}$ be such that $F(R_0W^2) = 0$.
Let us write $W$ in its canonical form \eqref{eq:skew_canonical_form}, with $\lambda_i\ne 0$.
Note that
\[
0=F(R_0W^2) = -\sum_{i=1}^k \lambda_{i}^2 F(R_0 R^T D_i R).
\]
By \eqref{eq:FDi} it follows that $F(R_0 R^T D_i R)=0$ for $i=1,\ldots,k$.
We then have
\[
F(R_0e^{tW}) = F(R_0) + \sum_{i=1}^k (\cos(\lambda_i t) - 1) F(R_0 R^T D_i R) = F(R_0),
\]
hence $R_0e^{tW}\in \calR$ for every $t\in \R$.
\end{proof}

\begin{lemma}\label{lem:calR_geodesics}
If $R_0,R_1$ are two distinct elements in $\calR$, then $\calR$ contains a geodesic of $\SO{n}$ that connects $R_0$ and $R_1$.
More precisely, if $R_1 = R_0 e^{W}$, where $W$ is of the form of Lemma~\ref{lem:geodesics_SO}, then
\[
\BRK{R_0e^{tW} ~:~ t\in \R} \subset \calR.
\]
\end{lemma}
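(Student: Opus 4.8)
The strategy is to reduce the statement to Lemma~\ref{lem:F_W_squared_zero}: it suffices to show that if $R_1 = R_0 e^W$ with $W$ in the canonical form of Lemma~\ref{lem:geodesics_SO} (eigenvalues $\lambda_i\in(-\pi,\pi]$), and both $R_0,R_1\in\calR$, then $F(R_0W^2)=0$. Once this is established, Lemma~\ref{lem:F_W_squared_zero} immediately gives $R_0 e^{tW}\in\calR$ for all $t\in\R$, which is the assertion. So the whole content is the identity $F(R_0W^2)=0$ under the eigenvalue restriction.

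First I would write $W = R^T\Sigma R$ in the canonical form \eqref{eq:skew_canonical_form} with nonzero values $\lambda_1,\dots,\lambda_k\in(-\pi,\pi]$, and use the formula from \eqref{eq:exp_W} that $\cosh(W) = I + \sum_{i=1}^k(\cos\lambda_i - 1)R^T D_i R$ and $\sinh(W)\in\mathbb{M}^{n\times n}_{\text{skew}}$. Since $R_1 = R_0 e^W = R_0\cosh(W) + R_0\sinh(W)$ and $F(R_0\sinh(W)) = 0$ by the first identity in \eqref{eq:forces_EL_new}, we get
\[
F(R_1) = F(R_0) + \sum_{i=1}^k(\cos\lambda_i - 1)\,F(R_0 R^T D_i R).
\]
Because both $R_0$ and $R_1$ are maximizers of $F$ over $\SO{n}$, $F(R_1) = F(R_0)$, hence $\sum_{i=1}^k(\cos\lambda_i-1)F(R_0R^TD_iR) = 0$. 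Now invoke \eqref{eq:FDi}: each $F(R_0R^TD_iR)\ge 0$. Since $\lambda_i\in(-\pi,\pi]$ and $\lambda_i\ne 0$, we have $\cos\lambda_i - 1 \le 0$ with equality only if $\lambda_i = \pm\pi$... wait — actually $\cos\lambda_i - 1 < 0$ strictly for $\lambda_i\in(-\pi,\pi)\setminus\{0\}$, and $\cos\lambda_i-1 = -2$ for $\lambda_i = \pi$; in all cases $\cos\lambda_i - 1 < 0$ since $\lambda_i\neq 0$ and $\lambda_i\in(-\pi,\pi]$. So each summand $(\cos\lambda_i - 1)F(R_0R^TD_iR)$ is $\le 0$, and the sum is zero, forcing $F(R_0R^TD_iR) = 0$ for every $i$. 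Then from $W^2 = R^T\Sigma^2 R = -\sum_{i=1}^k\lambda_i^2 R^TD_iR$ we conclude $F(R_0W^2) = -\sum_i\lambda_i^2 F(R_0R^TD_iR) = 0$, as needed.

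The only genuinely delicate point — and the reason the eigenvalue normalization of Lemma~\ref{lem:geodesics_SO} is essential — is the strictness $\cos\lambda_i - 1 < 0$: without restricting the $\lambda_i$ to an interval of length (at most) $2\pi$ avoiding nonzero multiples of $2\pi$, one could have $\lambda_i\in 2\pi\bbZ\setminus\{0\}$ with $\cos\lambda_i - 1 = 0$, and the argument would fail to pin down $F(R_0R^TD_iR)$. With the normalization in place this is automatic. I would then close by remarking that $t\mapsto R_0 e^{tW}$ is, by the choice of $W$ from Lemma~\ref{lem:geodesics_SO}, a minimizing geodesic of $\SO{n}$ on $[0,1]$ connecting $R_0$ to $R_1$, so the displayed inclusion $\{R_0 e^{tW} : t\in\R\}\subset\calR$ indeed exhibits a connecting geodesic inside $\calR$, completing the proof.
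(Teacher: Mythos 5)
Your proof is correct and follows essentially the same route as the paper: the core of both is the computation $0 = F(R_1 - R_0) = \sum_i (\cos\lambda_i - 1)\, F(R_0 R^T D_i R)$ together with \eqref{eq:FDi} and the strictness $\cos\lambda_i - 1 < 0$ for $\lambda_i\in(-\pi,\pi]\setminus\{0\}$, forcing each $F(R_0 R^T D_i R)=0$. The only (harmless) difference is that you then pass through $F(R_0W^2)=0$ and invoke Lemma~\ref{lem:F_W_squared_zero}, whereas the paper closes directly by observing $F(R_0 e^{tW} - R_0) = \sum_i a_i(\cos(t\lambda_i)-1) = 0$; your detour re-derives $a_i=0$ inside Lemma~\ref{lem:F_W_squared_zero}, but the conclusion is the same.
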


\begin{remark}
In dimensions $n=2,3$, we can actually obtain that any geodesic between $R_0$ and $R_1$ lies in $\calR$; for $n>3$, this is no longer the case due to conjugate points. See Appendix~\ref{app:example} for details. 
\end{remark}

\begin{proof}
Let $R_0,R_1\in \calR$, and pick $W\in \mathbb{M}^{n\times n}_{\text{skew}}$ such that $R_1=R_0 e^{W}$, with $W$ of the form of Lemma~\ref{lem:geodesics_SO}.
We therefore have, for some $R\in \SO{n}$, that
\[
0 = F(R_1-R_0) = \sum_{i=1}^k a_i (\cos(\lambda_i) - 1),\qquad a_i = F(R_0 R^T D_i R)\ge 0,
\]
where we used \eqref{eq:FDi}.
Since $\lambda_i \in (-\pi,\pi]\setminus \{0\}$, it follows that $a_i=0$ for all $i$.
But then, for every $t\in \R$,
\[
F(R_0e^{tW} - R_0) = \sum_{i=1}^k a_i (\cos(t\lambda_i) - 1) = 0,
\]
hence $R_0e^{tW}\in \calR$ for every $t\in \R$.
\end{proof}

Finally, we prove Proposition~\ref{pn:calR_manifold}.
\begin{proof1}{of Proposition~\ref{pn:calR_manifold}:}
We first prove that the set
\[
T:=\BRK{W\in \mathbb{M}^{n\times n}_{\text{skew}} ~:~ \quad F(W^2) = 0}
\]
is a vector space.
It is obvious that $T$ is closed under scalar multiplication; the idea is to "zoom in" at the origin, where we can effectively treat the geodesics that connect two matrices in $\SO{n}$ as straight lines in the linear space of matrices:
Assume that $W_1,W_2\in T$;
Lemma~\ref{lem:F_W_squared_zero} implies that $e^{taW_1},e^{tbW_2}\in \calR$ for every $a,b\in \R$ and $t> 0$.
We will show that for small $t$, the midpoint of the geodesic between $e^{taW_1}$ and $e^{tbW_2}$ belongs to $\calR$, and that this midpoint is $\exp\brk{\frac{t}{2}(aW_1 + bW_2 + O(t))}$.
The previous lemmata will then imply that $\frac{t}{2}(aW_1 + bW_2 + O(t))\in T$; we will then ``zoom out'' and obtain that $aW_1 + bW_2 \in T$.
Indeed, consider, for small $t$, the geodesic between $e^{taW_1}$ and $e^{tbW_2}$.
We can write it as
\[
\tau\mapsto e^{taW_1}e^{\tau Z},
\]
where $e^Z = e^{-taW_1}e^{tbW_2}$, hence
\[
Z = tbW_2 - taW_1 + O(t^2).
\]
Since $|Z|=O(t)$, we obtain that for small enough $t$, all the eigenvalues of $Z$ are close to zero, hence Lemma~\ref{lem:calR_geodesics} implies that this geodesic belongs to $\calR$.
In particular, we have that the midpoint of this geodesic, $e^{taW_1} e^{Z/2}$, belongs to $\calR$; we can write it as
\[
e^{taW_1} e^{Z/2} = e^{Z'}, \qquad Z' = \frac{t}{2}\brk{aW_1 + bW_2} + O(t^2).
\]
Using Lemma~\ref{lem:calR_geodesics} again, we have that $e^{\tau Z'}\in \calR$ for every $\tau$, from which we obtain that $Z'\in T$.
Since $T$ is closed to scalar multiplication, we have that $2Z'/t\in T$, thus
\[
aW_1 + bW_2 + O(t) \in T,
\]
for every $t>0$, and since $T$ is a closed set, we have that $aW_1 + bW_2 \in T$.

We now claim that at the vicinity of $I$, $\calR$ is the image of the exponential map restricted to $T$.
Indeed, Lemma~\ref{lem:F_W_squared_zero} implies that the image of the exponential map, restricted to $T$, is in $\calR$.
On the other hand, Lemma~\ref{lem:calR_geodesics} implies that if $R\in \calR$ then $R = e^{W}$ for some $W\in T$.
This tells us that at the vicinity of $I$, $\calR$ is a manifold whose tangent space is $T$.

However, we can do this analysis around any $R_0\in \calR$, and thus $\calR$ is indeed a manifold whose tangent space is $T\calR_{R_0}$. 
Lemma~\ref{lem:calR_geodesics} implies that it is connected.
Since for each $R_0$, $\calR$ is locally homeomorphic to an open neighborhood of the zero element of the vector space $T\calR_{R_0}$, we have that $\calR$ has no boundary; 
since, by definition, $\calR$ is a set of maximizers of a continuous function, it is closed.
We therefore deduce that $\calR$ is a closed manifold.

Finally, Lemma~\ref{lem:F_W_squared_zero} implies that for any $W\in T_{R_0}\calR$, the $\SO{n}$-geodesic $R_0e^{tW}$ stays on the submanifold $\calR$, hence $\calR$ is totally geodesic.
\end{proof1}

\section{Main results}\label{sec:main_results}

\begin{theorem}[Compactness]\label{thm:general_compactness}
Let $y_\e \in W^{1,2}(\W;\R^n)$ be such that $J_\e(y_\e) \le C\e^2$, and let $R_\e x + c_\e$ be a reference configuration of $y_\e$, satisfying the results of Lemma~\ref{lem:using_FJM}. 
Denote the \Emph{rescaled displacement} of $y_\e$ by
\beq\label{eq:displacement}
u_\e(x) = \frac{1}{\e} R_\e^T\brk{y_\e(x) -(R_\e x + c_\e)}.
\eeq
We then have the following, up to moving to a subsequence:
\begin{itemize}
\item $u_\e \weakly{} u_0$ in $W^{1,2}(\W;\R^n)$,
\item $R_\e \to R_0\in \calR$,
\item $\frac{1}{\sqrt{\e}}\brk{R_\e - \calP(R_\e)} \to R_0W_0$, for some $W_0 \in N\calR_{R_0}$,
\end{itemize}
where $N\calR_{R_0}$ and $\calP$ were defined in \eqref{eq:normal_space}--\eqref{eq:projection}.
Moreover, we have that $R_0$, $W_0$ are independent of the choice of $R_\e$, and $u_0$ is independent up to a change by an infinitesimal isometry $Ax + b$, where $A\in \mathbb{M}^{n\times n}_{\text{skew}}$ and $b\in \R^n$.
\end{theorem}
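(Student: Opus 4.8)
The plan is to peel off the three convergences one at a time, using Lemma~\ref{lem:using_FJM} for the basic bounds and Proposition~\ref{pn:calR_manifold} to control the transversal behaviour of $R_\e$, and then to check invariance under a change of reference configuration using the estimate $|R_\e-R_\e'|\lesssim\e$ from Lemma~\ref{lem:using_FJM}.

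First, since $R_\e\in\SO n$ is orthogonal, $\|u_\e\|_{W^{1,2}}=\frac1\e\|y_\e-(R_\e x+c_\e)\|_{W^{1,2}}\lesssim 1$ by Lemma~\ref{lem:using_FJM}, so up to a subsequence $u_\e\weakly{}u_0$ in $W^{1,2}(\W;\R^n)$; by compactness of $\SO n$ we may also assume $R_\e\to R_0\in\SO n$. To see $R_0\in\calR$ I would expand the energy: writing $y_\e-x=(R_\e-I)x+c_\e+\e R_\e u_\e$ and using the equilibrium condition \eqref{eq:total_force_2} to annihilate the $c_\e$–contribution, one gets
\[
J_\e(y_\e)=I_\e(y_\e)-\e^2\Big(\int_{\pl\W}f\cdot R_\e u_\e\,d\calH^{n-1}+\int_\W g\cdot R_\e u_\e\,dx\Big)-\e\,F(R_\e-I).
\]
Since $I_\e(y_\e)\ge0$, $\|u_\e\|_{W^{1,2}}\lesssim1$ (hence its trace is bounded in $L^2(\pl\W)$), $F(R_\e-I)\le0$ by \eqref{eq:forces_new}, and $J_\e(y_\e)\le C\e^2$, this forces $0\le -F(R_\e-I)\lesssim\e$. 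Letting $\e\to0$ gives $F(R_0-I)=0$, i.e.\ $R_0\in\calR$; crucially I also keep the quantitative bound $-F(R_\e-I)\lesssim\e$.

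Second, for the normal component I would establish the quadratic lower bound $-F(R-I)\gtrsim\dist_{\SO n}(R,\calR)^2$ for $R$ in a neighbourhood of $\calR$. Write $R=\calP(R)e^{V}$ with $V\in\mathbb{M}^{n\times n}_{\text{skew}}$, $e^{V}=\calP(R)^T R$; since $\calP$ is the nearest-point projection onto the closed submanifold $\calR$, the geodesic realizing the distance hits $\calR$ orthogonally, so $V\in N\calR_{\calP(R)}$ and $|V|=\dist_{\SO n}(R,\calR)$. Taylor expanding and using $F(\calP(R))=F(I)$ together with the first identity in \eqref{eq:forces_EL_new}, $F(R)-F(I)=\tfrac12 F(\calP(R)V^2)+O(|V|^3)$. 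By \eqref{eq:TR_R_0}–\eqref{eq:normal_space}, for $P\in\calR$ and $0\ne W\in N\calR_P$ one has $F(PW^2)<0$ (otherwise $PW\in T\calR_P\cap(R\calR_P)^\perp=\{0\}$); a compactness argument over the compact manifold $\calR$, using that the normal bundle is closed, yields a uniform $\mu>0$ with $-F(PW^2)\ge\mu|W|^2$ for all such $P,W$. Hence $-F(R-I)\ge\tfrac\mu2|V|^2-O(|V|^3)\gtrsim\dist_{\SO n}(R,\calR)^2$ near $\calR$. Combined with the bound from the previous step, $\dist_{\SO n}(R_\e,\calR)^2\lesssim\e$, i.e.\ $|V_\e|\lesssim\sqrt\e$ where $R_\e=\calP(R_\e)e^{V_\e}$. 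Then
\[
\frac1{\sqrt\e}\big(R_\e-\calP(R_\e)\big)=\frac1{\sqrt\e}\calP(R_\e)\big(e^{V_\e}-I\big)=\frac1{\sqrt\e}\calP(R_\e)V_\e+O(\sqrt\e),
\]
and since $\{V_\e/\sqrt\e\}$ is bounded in the finite-dimensional space $\mathbb{M}^{n\times n}_{\text{skew}}$, a further subsequence gives $V_\e/\sqrt\e\to W_0$; as $\calP(R_\e)\to\calP(R_0)=R_0$ we obtain $\frac1{\sqrt\e}(R_\e-\calP(R_\e))\to R_0W_0$, and $W_0\in N\calR_{R_0}$ because $V_\e\in N\calR_{\calP(R_\e)}$ and the normal bundle is closed.

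Third, for independence of the choice: if $R_\e'x+c_\e'$ is another admissible reference configuration, then $|R_\e-R_\e'|\lesssim\e$ by Lemma~\ref{lem:using_FJM}, so $R_\e'\to R_0$; since $\calP$ is Lipschitz near $\calR$, $|\calP(R_\e)-\calP(R_\e')|\lesssim\e$, hence $\frac1{\sqrt\e}(R_\e'-\calP(R_\e'))\to R_0W_0$ as well, so $R_0,W_0$ do not depend on the choice. For $u_0$, write $R_\e'=R_\e e^{S_\e}$ with $S_\e\in\mathbb{M}^{n\times n}_{\text{skew}}$, $|S_\e|\lesssim\e$; substituting $y_\e=R_\e x+c_\e+\e R_\e u_\e$ into the definition of $u_\e'$ gives $u_\e'=e^{-S_\e}u_\e-e^{-S_\e}\frac{S_\e}{\e}x+\frac1\e e^{-S_\e}R_\e^T(c_\e-c_\e')+O(\e)$; along a further subsequence $S_\e/\e\to A\in\mathbb{M}^{n\times n}_{\text{skew}}$ and (since every other term converges) $\frac1\e(c_\e-c_\e')$ converges to some $b\in\R^n$, so $u_\e'\weakly{}u_0-Ax+b$, an infinitesimal isometry. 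The main obstacle is the quadratic lower bound $-F(R-I)\gtrsim\dist_{\SO n}(R,\calR)^2$ — equivalently, the negative-definiteness of the relevant second fundamental form transversally to $\calR$ and the uniformity of the constant over $\calR$; everything else is soft compactness together with the expansion of $J_\e$.
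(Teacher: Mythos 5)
Your proof is correct, and it tracks the paper's overall structure (weak convergence of $u_\e$ via Lemma~\ref{lem:using_FJM}, identification of $R_0\in\calR$ from the energy split, a $\sqrt\e$ bound on $\dist_{\SO n}(R_\e,\calR)$ via Taylor expansion of $F$ at $\calP(R_\e)$, and finally invariance under a change of $R_\e$), but you take shortcuts at two points that the paper deliberately avoids. First, to get $\dist_{\SO n}(R_\e,\calR)=O(\sqrt\e)$, you establish the clean uniform quadratic lower bound $-F(R-I)\ge\mu\,\dist_{\SO n}(R,\calR)^2$ by observing that $F(PW^2)<0$ strictly for $0\ne W\in N\calR_P$ (a direct consequence of the tangent-space characterization \eqref{eq:TR_R_0}) and then compactifying over the unit normal bundle of $\calR$; the paper instead runs a subsequence/contradiction argument that reaches the same conclusion without explicitly extracting a uniform constant $\mu$. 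Both are valid, and your version is arguably cleaner. Second, and more substantially, for the uniqueness of $W_0$ you invoke that the nearest-point projection $\calP$ onto the closed submanifold $\calR$ is Lipschitz on a tubular neighbourhood, which gives $|\calP(R_\e)-\calP(R_\e')|\lesssim|R_\e-R_\e'|=O(\e)\ll\sqrt\e$ in one line; the paper instead proves $|\calP(R_\e)-\calP(R_\e')|\ll\sqrt\e$ by hand, through a fairly delicate geometric computation involving $|\inner{W_\e,\bar W_\e}|=O(\e^{1/4})$, thereby avoiding any appeal to regularity of $\calP$ beyond its mere well-definedness. Your shortcut is legitimate — $\calR$ is a compact smooth (indeed totally geodesic) submanifold of $\SO n$, so $\calP$ is smooth, hence Lipschitz, on a tubular neighbourhood — but it is worth being aware that this regularity is exactly what the paper chose not to rely on. The remaining pieces (strict negativity of $F(PW^2)$ on the nonzero normal directions, closedness of the normal bundle to ensure $W_0\in N\calR_{R_0}$, and the algebra $u_\e'=e^{-S_\e}u_\e - (S_\e/\e)x + \e^{-1}e^{-S_\e}R_\e^T(c_\e-c_\e') + O(\e)$ giving $u_0'=u_0 - Ax + b$) all check out and coincide with the paper's reasoning.
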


\begin{theorem}[$\Gamma$-convergence] \label{thm:general_Gamma}
Under the convergence $y_\e \to (u_0,R_0,W_0)$ as defined in Theorem~\ref{thm:general_compactness}, we have
\[
\Gamma-\lim \frac{1}{\e^2} J_\e(y_\e) 
= \int_\W \calQ(x,e(u_0(x)))\,dx - \int_{\pl \W} f\cdot R_0 u_0 \, \,d\calH^{n-1} -  \int_\W g\cdot R_0 u_0 \, dx - \frac{1}{2}F(R_0W_0^2),
\]
where $\calQ$ is defined in \eqref{eq:C_2_regularity}.
In particular, this means
\begin{enumerate}
\item \textbf{Lower bound:} 
If $y_\e \to (u_0,R_0,W_0)$, then
\[
\liminf \frac{1}{\e^2} J_\e(y_\e) \ge \int_\W \calQ(x,e(u_0(x)))\,dx - \int_{\pl \W} f\cdot R_0 u_0 \, \,d\calH^{n-1} -  \int_\W g\cdot R_0 u_0 \, dx - \frac{1}{2}F(R_0W_0^2).
\]

\item  \textbf{Upper bound:} 
For every $u_0\in W^{1,2}(\W;\R^n)$, $R_0\in \calR$ and $W_0\in N\calR_{R_0}$, there exists $y_\e\in W^{1,2}(\W;\R^n)$ such that $y_\e\to (u_0,R_0,W_0)$ and
\[
\lim \frac{1}{\e^2} J_\e(y_\e) = \int_\W \calQ(x,e(u_0(x)))\,dx - \int_{\pl \W} f\cdot R_0 u_0 \, \,d\calH^{n-1} -  \int_\W g\cdot R_0 u_0 \, dx - \frac{1}{2}F(R_0W_0^2).
\]
\end{enumerate}
\end{theorem}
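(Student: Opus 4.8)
The plan is to separate $\tfrac1{\e^2}J_\e(y_\e)$ into an elastic part and a forcing part and to identify the limit of each. Writing $y_\e = R_\e x + c_\e + \e R_\e u_\e$ as in \eqref{eq:displacement}, so that $\nabla y_\e = R_\e(I+\e\nabla u_\e)$, frame indifference~\ref{itm:frame_ind} gives $\calW(x,\nabla y_\e) = \calW(x, I+\e\nabla u_\e)$, while the equilibrium condition \eqref{eq:total_force_2} annihilates the contribution of the translation $c_\e$; using also the definition of $F$ one obtains
\[
\frac{1}{\e^2}J_\e(y_\e) = \frac{1}{\e^2}\int_\W \calW\big(x, I+\e\nabla u_\e\big)\,dx \;-\; \frac1\e F(R_\e - I) \;-\; \Big(\int_{\pl \W} f\cdot R_\e u_\e\,d\calH^{n-1} + \int_\W g\cdot R_\e u_\e\,dx\Big).
\]
The last term is continuous along our convergence: since $R_\e\to R_0$ and $u_\e\weakly{} u_0$ in $W^{1,2}(\W;\R^n)$, the traces converge strongly in $L^2(\pl\W)$ and $u_\e\to u_0$ strongly in $L^2(\W)$, so this term tends to $\int_{\pl\W} f\cdot R_0 u_0\,d\calH^{n-1}+\int_\W g\cdot R_0 u_0\,dx$. (Throughout, if $\liminf\tfrac1{\e^2}J_\e(y_\e)=+\infty$ there is nothing to prove, and otherwise we pass to a subsequence with $J_\e(y_\e)\le C\e^2$, so Lemma~\ref{lem:using_FJM} applies.)

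The key point is the term $-\tfrac1\e F(R_\e - I)$, where the geometry of Section~\ref{sec:geometry_calR} enters. Put $P_\e:=\calP(R_\e)\in\calR$ and write $R_\e = P_\e e^{W_\e}$ with $W_\e\in\mathbb{M}^{n\times n}_{\text{skew}}$ and $|W_\e| = \dist_{\SO n}(R_\e,\calR)\to0$; by Theorem~\ref{thm:general_compactness}, $\tfrac1{\sqrt\e}(R_\e-P_\e)\to R_0W_0$, and since $R_\e-P_\e = P_\e(W_\e+O(|W_\e|^2))$ and $P_\e\to R_0$ this yields $\tfrac1{\sqrt\e}W_\e\to W_0$. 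As $P_\e\in\calR$ and $I\in\calR$ we have $F(P_\e-I)=0$, and $F(P_\e W_\e)=0$ by the first identity in \eqref{eq:forces_EL_new}; Taylor expanding $e^{W_\e}$ we therefore get
\[
F(R_\e - I) = F\big(P_\e(e^{W_\e}-I)\big) = \tfrac12 F(P_\e W_\e^2) + O(|W_\e|^3).
\]
Dividing by $\e$: the remainder is $O\big(|W_\e|\cdot|W_\e|^2/\e\big)\to 0$, while $\tfrac1\e F(P_\e W_\e^2) = F\big(P_\e(W_\e/\sqrt\e)(W_\e/\sqrt\e)\big)\to F(R_0W_0^2)$ by continuity of $F$. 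Hence $-\tfrac1\e F(R_\e-I)\to -\tfrac12 F(R_0W_0^2)$, and combined with the previous paragraph the entire forcing part of $\tfrac1{\e^2}J_\e$ converges to $-\int_{\pl\W} f\cdot R_0 u_0\,d\calH^{n-1}-\int_\W g\cdot R_0 u_0\,dx-\tfrac12 F(R_0W_0^2)$.

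It remains to treat the elastic part. For the lower bound this is the standard linearization argument: on the ``good set'' $\W_\e:=\{x\in\W: \e|\nabla u_\e(x)|\le\delta\}$, whose complement has measure $O(\e^2)$ because $\|\nabla u_\e\|_{L^2}$ is bounded, the $C^2$ expansion \eqref{eq:C_2_regularity} gives $\calW(x,I+\e\nabla u_\e)\ge \e^2\calQ(x,\nabla u_\e)-\omega(\e|\nabla u_\e|)$; since $\omega(t)/t^2\to0$, for $\delta$ small the error is negligible after division by $\e^2$, and as $\chi_{\W_\e}\nabla u_\e\weakly{}\nabla u_0$ in $L^2$, convexity and lower semicontinuity of $B\mapsto\int_\W\calQ(x,B)$ together with \eqref{eq:Q_symmetric} give $\liminf\tfrac1{\e^2}I_\e(y_\e)\ge\int_\W\calQ(x,e(u_0))\,dx$ (dropping the nonnegative part on $\W\setminus\W_\e$); this is the lower bound. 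For the upper bound, for $u_0\in W^{1,\infty}(\W;\R^n)$ I take $R_\e:=R_0 e^{\sqrt\e W_0}$ and $y_\e(x):=R_\e\big(x+\e u_0(x)\big)$: since $W_0\in N\calR_{R_0}$ and $\calR$ is closed and totally geodesic (Proposition~\ref{pn:calR_manifold}) we have $\calP(R_\e)=R_0$ for $\e$ small, so $R_\e x$ is an admissible reference configuration in the sense of Lemma~\ref{lem:using_FJM}, with $u_\e=u_0$, $R_\e\to R_0$, and $\tfrac1{\sqrt\e}(R_\e-\calP(R_\e))\to R_0W_0$; the elastic energy $\tfrac1{\e^2}\int_\W\calW(x,I+\e\nabla u_0)\,dx$ converges to $\int_\W\calQ(x,e(u_0))\,dx$ by \eqref{eq:C_2_regularity} and the uniform bound on $\nabla u_0$, and the forcing part was already computed. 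A density argument (approximating $u_0\in W^{1,2}$ by $W^{1,\infty}$ maps, using that the target functional is continuous in $u_0$ and that the $\Gamma$-$\limsup$ is lower semicontinuous for the metrizable-on-bounded-sets notion of convergence used here) extends this to arbitrary $u_0$.

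The main obstacle is precisely the analysis of $-\tfrac1\e F(R_\e-I)$: it hinges on the fact that the nearest-point projection $\calP(R_\e)$ lies in $\calR$, that $F$ vanishes on $R W$ for $R\in\calR$ and skew $W$, and that $R_\e$ sits at distance exactly of order $\sqrt\e$ from $\calR$ with rescaled limit $R_0W_0$ — all consequences of Proposition~\ref{pn:calR_manifold} and Theorem~\ref{thm:general_compactness}. A secondary technical point is checking $\calP(R_0e^{\sqrt\e W_0})=R_0$ for small $\e$ (using that $\calR$ is a closed submanifold exited orthogonally by this geodesic), so that the recovery sequence genuinely converges in the sense of Theorem~\ref{thm:general_compactness}.
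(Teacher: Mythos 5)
Your proof is correct and follows essentially the same route as the paper: the same decomposition of $\tfrac1{\e^2}J_\e$ into elastic part, linear forcing part, and $-\tfrac1\e F(R_\e-I)$; the same use of $\calP(R_\e)\in\calR$, of $F(P_\e-I)=0$ and $F(P_\e W)=0$ for skew $W$ from \eqref{eq:forces_EL_new}, and of the second-order Taylor expansion to extract $-\tfrac12 F(R_0W_0^2)$; and the same linearization-with-cutoff argument for the elastic part. The only differences are cosmetic. For the lower bound you truncate at the fixed threshold $\e|\nabla u_\e|\le\delta$ and then send $\delta\to0$, while the paper truncates at $|\nabla u_\e|\le\e^{-1/2}$ so that $\omega(\e|\nabla u_\e|)/(\e|\nabla u_\e|)^2\to0$ uniformly on the good set in one step; both work. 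For the upper bound you prove the statement first for $u_0\in W^{1,\infty}$ and then invoke density plus lower semicontinuity of the $\Gamma$-$\limsup$, whereas the paper folds the density step directly into the recovery sequence by choosing $u_\e\in W^{1,\infty}$ with $u_\e\to u_0$ in $W^{1,2}$ and $\|\nabla u_\e\|_\infty<\e^{-1/2}$, which avoids having to argue about the topology of the convergence $(u_0,R_0,W_0)$; the paper's construction is slightly cleaner on that point but the two are equivalent.
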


\begin{theorem}[Convergence of minimizers]\label{thm:conv_min}
Let $y_\e \in W^{1,2}(\W;\R^n)$ be a sequence such that
\begin{equation}\label{alm-min}
J_\e(y_\e)\leq \inf_{W^{1,2}} J_\e +o(\e^2).
\end{equation}
Then there exist a sequence $R_\e\in \SO{n}$ and constants $c_\e\in \R^n$ such that, up to subsequences, the rescaled displacements
$$
u_\e(x) = \frac{1}{\e} R_\e^T\brk{y_\e(x) -(R_\e x + c_\e)}
$$
converge to $u_0$ strongly in $W^{1,q}(\W;\R^n)$ for every $1\leq q<2$, $R_\e$ converge to $R_0\in\calR$, and $\frac{1}{\sqrt{\e}}\brk{R_\e - \calP(R_\e)} \to 0$.
Furthermore, $(u_0, R_0)$ is a minimizer of the functional
	\[
	J(u,R) := \int_\W \calQ(x,e(u(x)))\,dx - \int_{\pl \W} f\cdot R u \, \,d\calH^{n-1} -  \int_\W g\cdot R u \, dx
	\]
on $W^{1,2}(\W;\R^n)\times\calR$.
\end{theorem}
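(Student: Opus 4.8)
### Proof proposal for Theorem~\ref{thm:conv_min}

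\textbf{Overall strategy.} The plan is to combine the compactness statement of Theorem~\ref{thm:general_compactness}, the $\Gamma$-convergence result of Theorem~\ref{thm:general_Gamma}, and the energy bound of Proposition~\ref{prop:inf_energy} in the standard way that $\Gamma$-convergence transfers to convergence of (almost-)minimizers, and then to upgrade the weak $W^{1,2}$ convergence of the rescaled displacements to strong $W^{1,q}$ convergence for $q<2$. Since the functional $I(u_0,R_0,W_0)$ contains the nonnegative penalization term $-\tfrac12 F(R_0W_0^2)\ge 0$ (by \eqref{eq:forces_EL_new}, as $R_0\in\calR$ implies $F(R_0W^2)\le 0$ for all skew $W$), any minimizing configuration of $I$ must have $W_0=0$, and on $W_0=0$ the limit functional collapses exactly to $J(u,R)$. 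This is the mechanism that forces $\tfrac{1}{\sqrt\e}(R_\e-\calP(R_\e))\to 0$ and identifies the limit of the displacements as a minimizer of $J$.

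\textbf{Key steps, in order.}
First I would fix a sequence $y_\e$ satisfying \eqref{alm-min}. By Proposition~\ref{prop:inf_energy}, $\inf J_\e\ge -C\e^2$, so $J_\e(y_\e)\le C'\e^2$ and Lemma~\ref{lem:using_FJM} applies: there are $R_\e\in\SO n$, $c_\e\in\R^n$ with $\|y_\e-(R_\e x+c_\e)\|_{W^{1,2}}\lesssim\e$, and any two such rotation sequences differ by $O(\e)$. Hence the hypotheses of Theorem~\ref{thm:general_compactness} hold, and along a subsequence $u_\e\weakly{}u_0$ in $W^{1,2}$, $R_\e\to R_0\in\calR$, and $\tfrac1{\sqrt\e}(R_\e-\calP(R_\e))\to R_0W_0$ with $W_0\in N\calR_{R_0}$. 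Second, I would show $(u_0,R_0,W_0)$ minimizes $I$: for any competitor $(v,Q,0)$ with $Q\in\calR$, the upper bound part of Theorem~\ref{thm:general_Gamma} produces a recovery sequence $\tilde y_\e\to(v,Q,0)$ with $\tfrac1{\e^2}J_\e(\tilde y_\e)\to I(v,Q,0)=J(v,Q)$; combined with \eqref{alm-min} and the liminf inequality of Theorem~\ref{thm:general_Gamma},
\[
I(u_0,R_0,W_0)\le \liminf\tfrac1{\e^2}J_\e(y_\e)\le \limsup\tfrac1{\e^2}J_\e(y_\e)\le \liminf\tfrac1{\e^2}J_\e(\tilde y_\e)=J(v,Q),
\]
using $J_\e(y_\e)\le J_\e(\tilde y_\e)+o(\e^2)$. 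Taking the infimum over $(v,Q)$ gives $I(u_0,R_0,W_0)\le\inf_{W^{1,2}\times\calR}J$. Third, since $-\tfrac12 F(R_0W_0^2)\ge 0$ and the remaining three terms of $I$ equal $J(u_0,R_0)\ge\inf J$, the only way the displayed inequality can hold is if $F(R_0W_0^2)=0$, i.e.\ $W_0\in T\calR_{R_0}$ by \eqref{eq:TR_R_0}; but $W_0\in N\calR_{R_0}$ too, so $W_0=0$, which forces $\tfrac1{\sqrt\e}(R_\e-\calP(R_\e))\to 0$, and $(u_0,R_0)$ minimizes $J$ on $W^{1,2}\times\calR$.

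\textbf{Strong convergence.} Finally I would upgrade $u_\e\weakly{}u_0$ to strong convergence in $W^{1,q}$, $q<2$. Set $v_\e:=R_\e^T\nabla y_\e$, so $\nabla u_\e=\tfrac1\e(v_\e-I)$. From Lemma~\ref{lem:using_FJM}, $I_\e(y_\e)=O(\e^2)$, i.e.\ $\int_\W\calW(x,\nabla y_\e)\,dx=O(\e^2)$; by frame indifference $\calW(x,\nabla y_\e)=\calW(x,v_\e)$, and coercivity gives $\int_\W\dist^2(v_\e,\SO n)\le C\e^2$. Writing $v_\e=R'_\e(x)e^{A_\e(x)}$ with $R'_\e(x)$ the nearest rotation, one gets (as in \cite{FJM02b,DNP02}) that $\tfrac1\e\,\mathrm{dist}(v_\e,\SO n)$ is bounded in $L^2$, hence $\mathrm{sym}\,\nabla u_\e=e(u_\e)$ is bounded in $L^2$ and, after the usual nonlinear truncation argument, $\tfrac1\e(v_\e-I)-\nabla\psi_\e\to\nabla u_0$ strongly in $L^2$ off a set of measure $o(1)$, while on a small bad set the gradients are controlled only in $L^q$; equivalently one invokes the now-standard fact (see the compactness proof of \cite{MPT19}, or a truncation \`a la Fonseca--Leoni) that a sequence with $e(u_\e)$ bounded in $L^2$, $u_\e$ bounded in $W^{1,2}$, and vanishing nonlinear defect converges strongly in $W^{1,q}$ for every $q<2$. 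Since the limit problem $\min_{W^{1,2}\times\calR}J$ may have many minimizers differing by infinitesimal rigid motions, I would state the convergence up to subtracting such a motion, exactly as in Theorem~\ref{thm:general_compactness}.

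\textbf{Main obstacle.} The routine part is the $\Gamma$-convergence/almost-minimizer bookkeeping in the second and third steps; the genuinely delicate point is the strong $W^{1,q}$ convergence. Weak $W^{1,2}$ compactness only controls the symmetrized gradient, not the full gradient, and the skew part of $\nabla u_\e$ need not be bounded in $L^2$ (this is the phenomenon $\sqrt\e\nabla u_\e\to W$ highlighted in \cite{MPT19}); the gain of strong convergence in $W^{1,q}$, $q<2$, must therefore come from the nonlinear structure via a careful truncation of the rigidity estimate, showing the large-gradient set has vanishing measure and is controlled in $L^q$ but not $L^2$. Making this truncation argument interact cleanly with the reference-configuration choice \eqref{eq:reference_configuration}—so that no spurious rotational drift survives in the limit—is where the real work lies.
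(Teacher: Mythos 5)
The first half of your proposal (identifying $(u_0,R_0)$ as a minimizer and forcing $W_0=0$) is correct and matches the paper's argument: Proposition~\ref{prop:inf_energy} gives $J_\e(y_\e)\le C'\e^2$, compactness produces $(u_0,R_0,W_0)$, and sandwiching the liminf inequality against recovery sequences with $W_0=0$ shows that $J(u_0,R_0)-\tfrac12 F(R_0W_0^2)\le J(v,R)$ for all $(v,R)$; since the penalty term is nonnegative and $R_0W_0$ would lie in $T\calR_{R_0}\cap (T\calR_{R_0})^\perp$ if it vanished, you correctly conclude $W_0=0$ and that $(u_0,R_0)$ minimizes $J$.

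The strong-convergence step, however, has a genuine gap, and your ``main obstacle'' paragraph diagnoses the difficulty incorrectly. You write that ``the skew part of $\nabla u_\e$ need not be bounded in $L^2$,'' but this is the phenomenon of \cite{MPT19}, not of the present paper: here $u_\e$ is defined relative to the reference configuration $R_\e x+c_\e$, and Lemma~\ref{lem:using_FJM} gives $\|\nabla u_\e\|_{L^2}=\tfrac1\e\|\nabla y_\e-R_\e\|_{L^2}\lesssim 1$, so the \emph{full} gradient is already bounded and the rotational drift has been absorbed into $R_\e$. The actual mechanism for strong convergence is different and you never invoke it: because $y_\e$ is an almost-minimizer and $(u_0,R_0,0)$ is a minimizer of the $\Gamma$-limit, one has the \emph{energy equality} $\lim\tfrac1{\e^2}J_\e(y_\e)=J(u_0,R_0)$ (choose $v=u_0$, $R=R_0$ in your sandwich). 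Feeding this back into the lower-bound inequalities forces $\lim\int_\W\calQ(x,\chi_\e^{1/2}e(u_\e))\,dx=\int_\W\calQ(x,e(u_0))\,dx$, and only then does the coercivity $\calQ(x,B)\ge c|\mathrm{sym}\,B|^2$, combined with the weak $L^2$ convergence $\chi_\e^{1/2}e(u_\e)\weakly{}e(u_0)$, upgrade to strong $L^2$ convergence of $\chi_\e^{1/2}e(u_\e)$. Truncation alone cannot do this — without the energy equality you would only retain weak convergence. Finally, the loss from $W^{1,2}$ to $W^{1,q}$, $q<2$, comes from removing the truncation $\chi_\e$ (on the small bad set $e(u_\e)$ is controlled only in $L^2$, so $(1-\chi_\e^{1/2})e(u_\e)\to 0$ in $L^q$ but not in $L^2$), after which Korn and Rellich give $W^{1,q}$ convergence. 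As written, your sketch substitutes a hand-waving appeal to ``nonlinear defect'' and rigidity for the essential use of the energy equality, so the argument does not close.
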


\begin{remark}
The results of \cite{MPT19} are an immediate consequence of Theorem~\ref{thm:general_compactness}. 
Indeed, let $y_\e \in W^{1,2}(\W;\R^n)$ be such that $J_\e(y_\e) \le C\e^2$ and let $v_\e=\frac1{\e}(y_\e-\id)$ be the displacement as defined in
\cite{MPT19}.
By \eqref{eq:displacement} we have that
\begin{equation}\label{veue}
\nabla v_\e=R_\e\nabla u_\e +\frac{R_\e-I}{\e}.
\end{equation}
From this relation it is clear that in general one cannot expect $v_\e$ to be bounded in $W^{1,2}$, since the limit $R_0$ of $R_\e$ may be different from $I$ and, even if $R_0=I$, the distance of $R_\e$ from $\calR$ is only of order $\sqrt{\e}$. Assume now that $\calR=\{I\}$. By Theorem~\ref{thm:general_compactness} and equation \eqref{veue} we deduce that $\sqrt{\e}\nabla v_\e$ converge, up to subsequences, to $W_0$ strongly in $L^2$. Moreover, writing $R_\e=e^{\sqrt{\e}W_\e}$, with
$W_\e$ a bounded sequence (Theorem~\ref{thm:general_compactness}),
we obtain
$$
e(v_\e)={\rm sym}(R_\e\nabla u_\e)+{\rm sym}\frac{R_\e-I}{\e}={\rm sym}(R_\e\nabla u_\e)+\frac12 W_\e^2+o(1),
$$
hence $e(v_\e)$ converge, up to subsequences, to $e(u_0)$ weakly in $L^2$, and $e(v_0)=e(u_0)+\frac12 W_0^2$. Thus, we recover the result of \cite{MPT19}.
\end{remark}
\begin{proof1}{of Theorem~\ref{thm:general_compactness}:}
\paragraph{Convergence of $u_\e$ and $R_\e$.} 
By Lemma~\ref{lem:using_FJM}, we have that $u_\e$ is bounded in $W^{1,2}$, from which the first assertion follows.
$\SO{n}$ is compact, hence, by moving to a subsequence, we have $R_\e \to R_0\in \SO{n}$.
Note that the boundedness of $u_\e$ implies that for some $C>0$ we have
\beq\label{eq:aux}
\begin{split}
\frac{1}{\e^2} J_\e(y_\e) 
	&= \frac{1}{\e^2}I_\e(y_\e) - \int_{\pl \W} f\cdot R_\e u_\e \, \,d\calH^{n-1} -  \int_\W g\cdot R_\e u_\e \, dx + \frac{1}{\e}F(I-R_\e) \\
	&\ge -C + \frac{1}{\e}F(I - R_\e).
\end{split}
\eeq
If $R_0\notin \calR$, then $\dist(R_0,\calR)\ge c$ for some constant $c>0$, and since, from the definition of~$\calR$,
\[
\min \{F(I-R) \,\,:\,\, R\in \SO{n},\, \dist(R,\calR)\ge c\} > 0,
\]
we obtain from \eqref{eq:aux} that $\e^{-2} J_\e(y_\e)\to \infty$, in contradiction.
This proves the second assertion.

\paragraph{Convergence of $\e^{-1/2}(R_\e-\calP(R_\e))$.}
First, note that $R_\e \to R_0\in \calR$ implies that $\calP(R_\e)$ is well-defined for small enough $\e$.
We first show that $\dist_{\SO{n}}(R_\e,\calR) = O(\sqrt{\e})$.

To simplify the notation, denote $Q_\e = \calP(R_\e)$ and $d_\e = \dist_{\SO{n}}(R_\e,\calR)$.
We therefore have $R_\e = Q_\e e^{d_\e W_\e}$ for some $W_\e \in N\calR_{Q_\e}$, with $|W_\e| = 1$.
Since  $R_\e \to R_0$, we also have $Q_\e \to R_0$, and therefore, by moving to a subsequence, we have that $W_\e\to W$, where $|W|= 1$ and $W\in N\calR_{R_0}$.
From \eqref{eq:aux} and \eqref{eq:forces_EL_new} we have that for some constant $C>0$,
\[
C \ge -\frac{1}{\e}F(R_\e - Q_\e) = -\frac{1}{\e} \brk{ \frac{d_\e^2}{2} F(Q_\e W_\e^2) + O(d_\e^3)},
\]
we therefore obtain that if $d_\e \gg \sqrt{\e}$, then $F(R_0 W^2) = \lim F(Q_\e W_\e^2) = 0$.
But this is a contradiction since $W$ is a non-zero element of $N\calR_{R_0}$.
We therefore obtain that $d_\e = O(\sqrt{\e})$ as needed.
By moving to a subsequence we have that $d_\e/\sqrt{\e}\to \alpha$ for some $\alpha\ge 0$.

Putting this all together we have
\[
\frac{1}{\sqrt{\e}}\brk{R_\e - Q_\e} = \frac{1}{\sqrt{\e}}(d_\e Q_\e W_\e + O(d_\e^2)) \to \alpha R_0 W, 
\]
which completes the proof as $W_0 = \alpha W \in N\calR_{R_0}$.

\paragraph{Uniqueness of $R_0$ and $e(u_0)$.}
We now show that $R_0$ is independent of the choice of $R_\e$, and that $u_0$ is also independent up to a change by a linear function $Ax + b$, with $A\in \mathbb{M}^{n\times n}_{\text{skew}}$.

Indeed, assume that $R_\e'$ is an alternative choice of rotations, $u_\e'$ are the associated displacements, and let $u_0'$ be their limit.
From Lemma~\ref{lem:using_FJM}, we know that $|R_\e-R_\e'|<C\e$ for some $C>0$; thus, $\lim R_\e' = \lim R_\e = R_0$.

Moreover, writing $R_\e' = R_\e e^{\e A_\e}$ for some uniformly bounded matrices $A_\e \in \mathbb{M}^{n\times n}_{\text{skew}}$, 
we have
\[
\nabla u_\e' = \frac{1}{\e} \brk{(R_\e')^T \nabla y_\e - I} = \frac{1}{\e} \brk{e^{-\e A_\e} R_\e^T \nabla y_\e - I} = \nabla u_\e - A_\e R_\e^T \nabla y_\e + O(\e).
\]
Here $O(\e)$ is with respect to the $L^2$ norm.
By passing to the limit, using the fact that $A_\e$ is antisymmetric and $R_\e^T\nabla y_\e \to I$ strongly in $L^2$ (Lemma~\ref{lem:using_FJM}), we obtain that $u_0' = u_0 + Ax + b$, where $A\in \mathbb{M}^{n\times n}_{\text{skew}}$.

\paragraph{Uniqueness of $W_0$.}
It remains to show that $W_0$ is independent of the choice of $R_\e$.
Assume we have an alternative choice of rotations $R_\e'$.
From Lemma~\ref{lem:using_FJM}, we have that $|R_\e-R_\e'|=O(\e)$.

Denote $Q_\e = \calP(R_\e)$, $Q_\e' = \calP(R_\e')$ and define
\[
d_\e := \dist_{\SO{n}}(R_\e,Q_\e), \qquad d_\e' := \dist_{\SO{n}}(R_\e' , Q_\e').
\]
We have already established the bounds 
\[
\dist_{\SO{n}}(R_\e ,R_\e') = O(\e), \qquad d_\e,\, d_\e' = O(\sqrt{\e}).
\]
From the definition of $Q_\e$ and $Q_\e'$ it therefore follows that
\beq\label{eq:d_e_d_e_prime}
|d_\e - d_\e'| = O(\e), \qquad \dist_{\SO{n}}(Q_\e , Q_\e') = O(d_\e, \e).
\eeq
Indeed, this follows from
\[
d_\e = \dist_{\SO{n}}(R_\e,\calR) \le \dist_{\SO{n}}(R_\e,Q_\e') \le \dist_{\SO{n}}(R_\e,R_\e') +\dist_{\SO{n}}(R_\e',Q_\e') = d_\e' + O(\e),
\]
and similarly when reversing the roles of $R_\e$ and $R_\e'$.

Our goal is to obtain $|Q_\e - Q_\e'| \ll \sqrt{\e}$, which would imply the uniqueness of $W_0$.
If $d_\e \ll \sqrt{\e}$, then we are done by \eqref{eq:d_e_d_e_prime}, since the extrinsic and intrinsic distances on $\SO{n}$ are equivalent.
We can therefore assume that $d_\e \approx \sqrt{\e}$.
Let us write
\[
R_\e = Q_\e e^{d_\e W_\e}, \qquad   Q_\e' = Q_\e e^{t_\e \bar{W}_\e}
\]
where $W_\e, \bar{W}_\e \in \mathbb{M}^{n\times n}_{\text{skew}}$ are of norm $1$, and $t_\e = |Q_\e - Q_\e'| + O(\e)$ (see \eqref{eq:equiv_dist}).
In particular $t_\e\to 0$.

Since both $Q_\e,Q_\e'\in \calR$ are optimal rotations, we obtain from Lemma~\ref{lem:calR_geodesics} that for $\e$ small enough, $Q_\e e^{t \bar{W}_\e}\in \calR$ for any $t\in \R$.
We therefore have, for any $t\in \R$,
\[
d_\e = \dist_{\SO{n}}(R_\e,\calR) \le \dist_{\SO{n}}(R_\e,Q_\e e^{t \bar{W}_\e}) = |d_\e W_\e - t \bar{W}_\e| + O(d_\e^2, t^2).
\]
Let us restrict ourselves to $|t| \le c d_\e$ for some $c>0$.
Since $d_\e \approx \sqrt{\e}$, we obtain that
\[
1 \le | W_\e - \alpha \bar{W}_\e| + O(\sqrt{\e}) \qquad \forall \alpha \in [-c,c].
\]
Now, since $|W_\e| = |\bar{W}_\e| = 1$, we have
\[
| W_\e - \alpha \bar{W}_\e| = \brk{1- 2\alpha \inner{W_\e,\bar{W}_\e} + \alpha^2}^{1/2} \le 1 + \alpha \inner{W_\e,\bar{W}_\e} + \frac{\alpha^2}{2},
\]
from which we obtain that
\[
| \inner{W_\e,\bar{W}_\e}| = O(\e^{1/4}).
\]

On the other hand we have
\[
\dist_{\SO{n}}(R_\e, Q_\e') \le \dist_{\SO{n}}(R_\e ,R_\e') + \dist_{\SO{n}}(R_\e',\calR) = d_\e' + O(\e) = d_\e + O(\e).
\]
Therefore, using again the fact that $d_\e\approx \sqrt{\e}$, we have
\[
\begin{split}
d_\e &\ge \dist_{\SO{n}}(R_\e ,Q_\e') + O(\e)
		= |d_\e W_\e - t_\e \bar{W}_\e| + O(\e, t_\e^2) \\
	&= \brk{d_\e^2 + t_\e^2 - 2t_\e d_\e \inner{W_\e,\bar{W}_\e}}^{1/2} + O(\e, t_\e^2) \\
	&= \sqrt{d_\e^2 + t_\e^2}\brk{1 - \frac{2t_\e d_\e}{d_\e^2 + t_\e^2} \inner{W_\e,\bar{W}_\e}}^{1/2} + O(\e, t_\e^2) \\
	&\ge \sqrt{d_\e^2 + t_\e^2}\brk{1 - |\inner{W_\e,\bar{W}_\e}|}^{1/2} + O(\e, t_\e^2), \\
\end{split}
\]
which implies that $t_\e \ll d_\e = O(\sqrt{\e})$, hence $|Q_\e - Q_\e'| \ll \sqrt{\e}$, which completes the uniqueness proof.
\end{proof1}

\begin{proof1}{of Theorem~\ref{thm:general_Gamma}:}
\paragraph{Lower bound.}

First consider the elastic part $\e^{-2}I_\e(y_\e)$.
We have, using frame indifference,
\[
I_\e(y_\e) = I_\e(R_\e(x + \e u_\e(x))) = I_\e(x+ \e u_\e(x)) = \int_\W \calW(I+ \e \nabla u_\e(x))\,dx.
\]
Taylor expanding $W(I+A)$, we have from the regularity assumption \ref{item:regularity} and \eqref{eq:Q_symmetric} that
\[
\Abs{\calW(x,I+ \e \nabla u_\e) - \e^2 \calQ(x,e(u_\e))} \le \w(\e |\nabla u_\e|),
\]
where $\w(t)$ is a non-negative function satisfying $\lim_{t\to0} \w(t)/t^2 = 0$.
We therefore have
\[
\begin{split}
\frac{1}{\e^2} I_\e(y_\e) &\ge \int_\W \brk{ \calQ(x,e(u_\e)) - \frac{\w(\e |\nabla u_\e|)}{\e^2}}\,dx \\
	& \ge \int_\W \chi_\e \brk{ \calQ(x,e(u_\e)) - \frac{\w(\e |\nabla u_\e|)}{\e^2}}\,dx \\
	& = \int_\W \brk{  \calQ(x,\chi_\e^{1/2} e(u_\e)) - \chi_\e |\nabla u_\e|^2 \frac{\w(\e |\nabla u_\e|)}{\e^2 |\nabla u_\e|^2}}\,dx,
\end{split}
\]
where 
\begin{equation}\label{chie}
\chi_\e(x) = \begin{cases} 1 &\text{ if } |\nabla u_\e(x)| < \e^{-1/2}, \\ 0 & \text{ if } |\nabla u_\e(x)|\ge \e^{-1/2}. \end{cases}
\end{equation}
Since $u_\e\weakly{} u_0$ in $W^{1,2}$, we have that $\chi_\e \to 1$ in $L^2$ and therefore also $\chi_\e^{1/2} e(u_\e) \weakly{} e(u_0)$ in $L^2$.
Therefore, since $\calQ(x,\cdot)$ is positive-semidefinite (and in particular, convex), we have that
\[
\liminf \int_\W  \calQ(x,\chi_\e^{1/2} e(u_\e))\,dx \ge \int_\W \calQ(x,e(u_0))\,dx.
\]
From this, and the fact that $\chi_\e\frac{\w(\e |\nabla u_\e|)}{\e^2 |\nabla u_\e|^2}\to 0$ uniformly, we obtain that
\begin{equation}\label{liminf_Ie}
\liminf \frac{1}{\e^2} I_\e(y_\e) \ge \int_\W \calQ(x,e(u_0))\,dx.
\end{equation}
Next, since $R_\e u_\e \weakly{} R_0 u_0$ in $W^{1,2}$, we have that
\[
\int_{\pl \W} f\cdot R_\e u_\e \, \,d\calH^{n-1} +  \int_\W g\cdot R_\e u_\e \, dx \to 
\int_{\pl \W} f\cdot R_0 u_0 \, \,d\calH^{n-1} +  \int_\W g\cdot R_0 u_0 \, dx.
\]
Finally, writing $R_\e = \calP(R_\e) e^{\sqrt{\e}W_\e}$, we have that
\[
\frac{1}{\e} F(R_\e - I) = \frac{1}{\e} F(R_\e - \calP(R_\e)) = \frac{1}{2} F\brk{\calP(R_\e) W_\e^2} + O(\e) \to \frac{1}{2}F(R_0 W_0^2).
\]
Putting all these together, we have
\[
\begin{split}
&\liminf_{\e\to 0} \frac{1}{\e^2} J_\e(y_\e) \\
	&\quad = \liminf_{\e\to 0} \frac{1}{\e^2}I_\e(y_\e) - \lim_{\e\to 0}\brk{\int_{\pl \W} f\cdot R_\e u_\e \, \,d\calH^{n-1} +  \int_\W g\cdot R_\e u_\e \, dx} - \lim_{\e\to 0} \frac{1}{\e} F(R_\e - I) \\
	&\quad \ge \int_\W \calQ(x, e(u_0))\,dx - \int_{\pl \W} f\cdot R_0 u_0 \, \,d\calH^{n-1} -  \int_\W g\cdot R_0 u_0 \, dx - \frac{1}{2}F(R_0 W_0^2),
\end{split}
\]
which completes the proof of the lower bound.

\paragraph{Upper bound.}
For $u_0 \in W^{1,2}$, choose a sequence $u_\e \in W^{1,\infty}$ such that $u_\e\to u_0$ in $W^{1,2}$ and $\|\nabla u_\e\|_{\infty} < \e^{-1/2}$. 
Define $y_\e := R_0 e^{\sqrt{\e}W_0}  (x + \e u_\e)$.
In this case we have $R_\e = R_0 e^{\sqrt{\e}W_0}$ and $u_\e$ is indeed the displacement of $y_\e$ as in \eqref{eq:displacement}.
Note that since $R_0\in \calR$ and $W_0 \in N\calR_{R_0}$, we have that $R_0 = \calP(R_\e)$.
It follows that $y_\e \to (u_0,R_0,W_0)$ as needed.
Now, similarly as in the lower bound, we have
\[
\begin{split}
\Abs{\frac{1}{\e^2} I_\e(y_\e) - \int_\W \calQ(x, e(u_\e))\,dx} &\le \int_\W \frac{\w(\e |\nabla u_\e|)}{\e^2}\,dx
	\le  \int_\W |\nabla u_\e|^2 \frac{\w(\e |\nabla u_\e|)}{\e^2|\nabla u_\e|^2}\,dx \to 0,
\end{split}
\]
since $\e \|\nabla u_\e\|_{\infty} = O(\sqrt{\e})$.
Now, since $u_\e \to u_0$ strongly in $W^{1,2}$ and $D_A^2\calW(\cdot,I)$ is in $L^\infty$, we have that $\int_\W \calQ(x, e(u_\e))\,dx \to \int_\W \calQ(x, e(u_0))\,dx$.

The forces part behaves exactly as in the lower bound, yielding
\[
\begin{split}
\lim_{\e\to 0} \frac{1}{\e^2} J_\e(y_\e)
	= \int_\W \calQ(x, e(u_0))\,dx - \int_{\pl \W} f\cdot R_0 u_0 \, \,d\calH^{n-1} -  \int_\W g\cdot R_0 u_0 \, dx - \frac{1}{2}F(R_0 W_0^2).
\end{split}
\]
\end{proof1}

\begin{proof1}{of Theorem~\ref{thm:conv_min}:}
By Proposition~\ref{prop:inf_energy} we have that $J_\e(y_\e) < C\e^2$, hence by Theorem~\ref{thm:general_compactness} there exist 
$u_0\in W^{1,2}(\W;\R^n)$, $R_0\in\calR$, and $W_0 \in N\calR_{R_0}$ such that $u_\e \weakly{} u_0$ in $W^{1,2}$,
$R_\e \to R_0$, and
\beq\label{liminf-min}
\liminf \frac{1}{\e^2} J_\e(y_\e) \ge J(u_0,R_0) - \frac{1}{2}F(R_0W_0^2),
\eeq
where we used the lower bound in Theorem~\ref{thm:general_Gamma}. 

Let now $v\in W^{1,2}$ and $R\in\calR$. By the upper bound in Theorem~\ref{thm:general_Gamma} with $W_0=0$ there exists a sequence
$v_\e\in W^{1,2}$ such that 
\begin{equation}\label{limsup-min}
\lim \frac{1}{\e^2} J_\e(v_\e) = J(v,R) .
\end{equation}
Combining \eqref{alm-min}, \eqref{liminf-min}, and \eqref{limsup-min}, we deduce
\begin{equation}\label{min-ineq}\begin{split}
J(u_0,R_0) - \frac{1}{2}F(R_0W_0^2) & \leq \liminf \frac{1}{\e^2} J_\e(y_\e)\leq \limsup \frac{1}{\e^2} J_\e(y_\e) = \limsup \inf_{W^{1,2}}
\frac{1}{\e^2} J_\e \\
& \leq \lim \frac{1}{\e^2} J_\e(v_\e)= J(v,R) .
\end{split}\end{equation}
Therefore, $(u_0, R_0)$ is a minimizer of the functional $J$ on $W^{1,2}\times\calR$, and $W_0 = 0$ (this follows from \eqref{eq:forces_EL_new} and the definition of $N\calR_{R_0}$).

It remains to show that $u_\e$ converge to $u_0$ strongly in $W^{1,q}$ for every $1\leq q<2$. Choosing $v=u_0$ and $R=R_0$ in \eqref{min-ineq} we obtain
$$
 \lim \frac{1}{\e^2} J_\e(y_\e)=J(u_0,R_0),
$$
hence
$$
\frac{1}{\e^2} I_\e(y_\e) - \frac{1}{\e} F(R_\e - I) \to \int_\W \calQ(x, e(u_0))\,dx.
$$
Equation \eqref{liminf_Ie} and the fact that $I$ is an optimal rotation imply that $\frac{1}{\e} F(R_\e - I)\to0$ and
\begin{equation}\label{energy-eq}
 \lim \frac{1}{\e^2} I_\e(y_\e)=\int_\W \calQ(x, e(u_0))\,dx.
\end{equation}
Let now $\chi_\e$ be defined as in \eqref{chie}. From the proof of the lower bound in Theorem~\ref{thm:general_Gamma} it follows that
\[
\begin{split}
 \lim \frac{1}{\e^2} I_\e(y_\e) & \geq \limsup \int_\W  \calQ(x,\chi_\e^{1/2} e(u_\e))\,dx  \\
& \ge \liminf \int_\W  \calQ(x,\chi_\e^{1/2} e(u_\e))\,dx \ge \int_\W \calQ(x,e(u_0))\,dx. \end{split}
\]
Therefore, by \eqref{energy-eq} we obtain
\begin{equation}\label{Q-cont}
 \lim \int_\W  \calQ(x,\chi_\e^{1/2} e(u_\e))\,dx = \int_\W \calQ(x,e(u_0))\,dx.
\end{equation}
By the coercivity of $\calQ$ we have that
\[
\begin{split}
c\int_\W |\chi_\e^{1/2} e(u_\e)-e(u_0)|^2\, dx  & \leq \int_\W  \calQ(x,\chi_\e^{1/2} e(u_\e)-e(u_0))\,dx \\
& = \int_\W  \calQ(x,\chi_\e^{1/2} e(u_\e))\,dx -\int_\W  D_A^2\calW(x,I)(\chi_\e^{1/2} e(u_\e),e(u_0))\,dx \\
& \quad+ \int_\W  \calQ(x,e(u_0))\,dx.
\end{split}
\] 
We now use the weak convergence of $\chi_\e^{1/2} e(u_\e)$ to $e(u_0)$ in $L^2$, the boundedness of $D_A^2\calW(x,I)$, and equation \eqref{Q-cont}, to deduce that
$\chi_\e^{1/2} e(u_\e)\to e(u_0)$ strongly in $L^2$.
Since $\chi_\e\to 1$ in $L^p$ for every $1\leq p<\infty$ and $e(u_\e)$ is bounded in $L^2$, we have that
$(1-\chi_\e^{1/2})e(u_\e)\to 0$ strongly in $L^q$ for every $1\leq q<2$, hence $e(u_\e)\to e(u_0)$ strongly in $L^q$ for every $1\leq q<2$.
By Korn's inequality there exists, for every $q\in (1,2)$, a constant $c_q$ such that
$$
\int_\W |\nabla u_\e-\nabla u_0|^q\, dx\leq c_q\int_\W |e(u_\e)-e(u_0)|^q\, dx + c_q\int_\W |u_\e-u_0|^q\, dx.
$$
By the Rellich Theorem $u_\e\to u_0$ strongly in $L^q$, hence we conclude that $u_\e\to u_0$ strongly in $W^{1,q}$ for every $q\in (1,2)$. 
The convergence for $q=1$ follows immediately since $\W$ is a bounded domain.
\end{proof1}

\section{Classification and examples of optimal rotations}\label{sec:examples}

In this section we classify the possible sets $\calR$ of optimal rotations, in dimensions $n=2,3$.
The optimal rotations are derived from the functional $F\in (\mathbb{M}^{n\times n})^*$.
Endowing $\mathbb{M}^{n\times n}$ with the Frobenius inner-product, we can identify $F$ with an $n\times n$ matrix, which we will also denote by $F$;
since $F(W) = 0$ for any $W\in \mathbb{M}^{n\times n}_{\text{skew}}$, it follows that $F$ is a symmetric matrix.
Note that the assumption $I\in \calR$ gives further restrictions on $F$, as seen in \eqref{eq:FDi}; in particular, it cannot be an arbitrary symmetric matrix.

\begin{proposition}[Classification of optimal rotations in 2D]
When $n=2$, the set of optimal rotations is either $\calR=\{I\}$ or $\calR = \SO{2}$.
The latter case happens if and only if $\tr F = 0$.
\end{proposition}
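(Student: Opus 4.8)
The plan is to analyze the structure of the symmetric matrix $F$ in dimension $n=2$ under the standing assumption $I \in \calR$, and then to invoke Proposition~\ref{pn:calR_manifold} together with the $2$-dimensional geometry of $\SO{2}$. Since $\dim \SO{2} = 1$ and $\calR$ is a closed, connected, boundaryless submanifold of $\SO{2}$ (Proposition~\ref{pn:calR_manifold}), its dimension is either $0$ or $1$; in the first case connectedness forces $\calR$ to be a single point, which by our normalization is $\{I\}$, and in the second case $\calR$ is an open and closed subset of the connected $1$-manifold $\SO{2}$, hence all of $\SO{2}$. So the entire content is to decide which of the two alternatives holds, and to match it with the condition $\tr F = 0$.

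First I would write down the relevant skew matrix: in $2$D the only (up to scalar) skew-symmetric matrix is $W = \begin{pmatrix} 0 & 1 \\ -1 & 0 \end{pmatrix}$, so that $W^2 = -I$ and $e^{tW}$ is rotation by angle $t$. By Proposition~\ref{pn:calR_manifold}, the tangent space $T\calR_I$ is nontrivial (equivalently $\calR \ne \{I\}$) if and only if $F(W^2) = 0$, i.e.\ $F(-I) = 0$, i.e.\ $\tr F = 0$ (identifying $F$ with its symmetric matrix via the Frobenius inner product, as in the paragraph preceding the statement, so that $F(-I) = -\langle F, I\rangle = -\tr F$). Thus $\tr F = 0$ is exactly the condition that $T\calR_I \ne \{0\}$, which by the dimension dichotomy above is equivalent to $\calR = \SO{2}$.

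To package this cleanly I would proceed as follows. (i) Note $\calR$ is a closed connected $0$- or $1$-dimensional submanifold of $\SO{2}$; conclude $\calR = \{I\}$ or $\calR = \SO{2}$. (ii) Compute, using Lemma~\ref{lem:F_W_squared_zero} (or directly \eqref{eq:TR_R_0}), that $\calR \ne \{I\}$ iff $F(W^2) = F(-I) = 0$ iff $\tr F = 0$. (iii) Conversely, if $\tr F = 0$ then $T\calR_I = \mathbb{M}^{2\times 2}_{\text{skew}}$ is $1$-dimensional, so $\calR$ is $1$-dimensional, hence $\calR = \SO{2}$; and if $\tr F \ne 0$ then $T\calR_I = \{0\}$, so $\calR = \{I\}$. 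This establishes the equivalence.

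I do not expect a genuine obstacle here — the statement is essentially a corollary of Proposition~\ref{pn:calR_manifold} once one records that $\SO{2}$ is $1$-dimensional. The only point requiring a little care is the bookkeeping of signs and the identification of $F$ with a symmetric matrix: one must be consistent that $F(A) = \langle F, A \rangle$ in the Frobenius pairing, so that the relevant curvature quantity $F(W^2)$ for $W^2 = -I$ is $-\tr F$, and hence "$F(W^2) = 0$'' reads as "$\tr F = 0$''. An alternative, more self-contained route — which I would mention as a sanity check — is to compute $F(e^{tW}) = F(\cos t\, I + \sin t\, W) = \cos t\,\langle F, I\rangle + \sin t\,\langle F, W\rangle = (\tr F)\cos t$ (using $\langle F, W\rangle = 0$ since $F$ is symmetric), and observe that this function of $t$ is constant iff $\tr F = 0$, in which case every rotation is a maximizer, and otherwise its maximizers are isolated, forcing $\calR = \{I\}$ after the normalization $I \in \calR$ (which additionally pins down the sign, $\tr F > 0$).
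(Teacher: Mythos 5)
Your proof is correct and follows essentially the same route as the paper's: both first invoke Proposition~\ref{pn:calR_manifold} to conclude that $\calR$, being a closed, connected, boundaryless submanifold of the one-dimensional manifold $\SO{2}$, is either a point or all of $\SO{2}$, and both then pin down the dividing condition as $\tr F = 0$. The only (minor) difference is in how the latter step is phrased: you read off the criterion from the tangent-space formula \eqref{eq:TR_R_0} at $I$, noting that in $2$D the unique skew direction $W$ has $W^2 = -I$ so that $F(W^2) = -\tr F$, whereas the paper first observes that $\calR = \SO{2}$ forces $F|_{\SO{2}} \equiv 0$ (since $R$ and $-R$ are both rotations and $F$ is linear) and then computes $F(R) = (\tr F)\cos\alpha$ using the symmetry of $F$. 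Your "sanity check'' computation $F(e^{tW}) = (\tr F)\cos t$ is in fact precisely the paper's final computation, so the two arguments coincide up to presentation.
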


\begin{proof}
Since $\calR$ is a complete, connected, closed, boundryless submanifold of $\SO{2}$, and $\SO{2}$ is one dimensional, $\calR$ is either a singleton or the whole $\SO{2}$.
Since $R\in \SO{2}$ implies that $-R\in \SO{2}$, the case $\mathcal{R} = \SO{2}$ happens if and only if $F(R)=0$ for every $R\in \SO{2}$.
Since $F$ is symmetric and $R= \brk{\begin{matrix} \cos \alpha & -\sin \alpha \\ \sin \alpha & \cos \alpha \end{matrix} }$ for some angle $\alpha$, this holds if and only if $F$ is traceless.
\end{proof}

\begin{proposition}[Classification  of optimal rotations in 3D]
When $n=3$, the set of optimal rotation is either $\calR=\{I\}$ or one of the following:
\begin{itemize}
\item $\calR = \SO{3}$, if and only if $F\equiv 0$.
\item $\calR$ is isometric to the real projective plane $\mathbb{P}_2(\R) \cong S^2\big/\sim$, where $\sim$ is the identification of antipodal points and $S^2$ is the round sphere.
	This happens if and only if the eigenvalues of $F$ are $a,a,-a$ for some $a>0$.
\item $\calR$ is a single closed geodesic (that is, it is isometric to $\SO{2} \cong S^1$);
	this happens if and only if the eigenvalues of $F$ are $b,a,-a$ for some $b>a\ge 0$.
\end{itemize}
\end{proposition}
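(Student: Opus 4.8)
The plan is to combine the general geometric facts from Proposition~\ref{pn:calR_manifold} with the linear-algebraic constraint \eqref{eq:FDi}, which severely restricts the symmetric matrix $F$ once we use that $I\in\calR$. First, since $F$ is symmetric, diagonalize $F=\diag(\mu_1,\mu_2,\mu_3)$ in an orthonormal basis (this amounts to conjugating the forces by a fixed rotation, which is harmless). Apply \eqref{eq:FDi} with $R_0=I$ and $R$ chosen so that $R^TD_iR$ picks out, in turn, the three coordinate $2$-planes: this gives $\mu_1+\mu_2\ge0$, $\mu_1+\mu_3\ge0$, $\mu_2+\mu_3\ge0$. On the other hand, $I\in\calR$ means $F(R)\le F(I)=\mu_1+\mu_2+\mu_3$ for all $R\in\SO3$; taking $R=e^{tW}$ and using that for a rotation by angle $\pi$ about the $e_k$-axis one has $F(R)=\mu_k-\mu_i-\mu_j$ (the two coordinates orthogonal to the axis flip sign), we get $\mu_i+\mu_j\ge0$ again, so these are exactly the binding constraints. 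The tangent space formula \eqref{eq:TR_R_0} at $R_0=I$ says $T\calR_I=\{W\in\mathbb{M}^{3\times3}_{\text{skew}}:F(W^2)=0\}$; writing a general skew $W$ with axis vector $w\in\R^3$ one computes $W^2=ww^T-|w|^2I$, so $F(W^2)=w^TFw-|w|^2\tr F=-\sum_{k}\mu_k|w|^2+\sum_k\mu_k w_k^2$, i.e. $F(W^2)=-\sum_{i<j}(\mu_i+\mu_j)w_k^2$ where $k$ is the index complementary to $\{i,j\}$. Since each $\mu_i+\mu_j\ge0$, the dimension of $T\calR_I$ is exactly the number of pairs with $\mu_i+\mu_j=0$.

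Now I would do the case analysis on how many of the three sums $s_k:=\mu_i+\mu_j$ (with $k$ complementary) vanish. \textbf{Zero sums vanish:} $\dim\calR=0$, so $\calR$ is a point, namely $\calR=\{I\}$. \textbf{All three vanish:} $s_1=s_2=s_3=0$ forces $\mu_1=\mu_2=\mu_3=0$, so $F\equiv0$, every rotation is optimal, $\calR=\SO3$. \textbf{Exactly one vanishes:} say $s_3=\mu_1+\mu_2=0$ with $s_1,s_2>0$; then $\dim\calR=1$, and by Proposition~\ref{pn:calR_manifold} $\calR$ is a closed, connected, boundaryless, totally geodesic $1$-submanifold, hence a single closed geodesic of $\SO3$, which is isometric to a circle $S^1\cong\SO2$. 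Translating the conditions: $\mu_1=-\mu_2=:a\ge0$ and $\mu_3=:b$ with $s_1=\mu_2+\mu_3=b-a>0$ forcing $b>a$, while $s_2=\mu_1+\mu_3=a+b>0$ is then automatic; and we need $a\ge0$ (if $a=0$ then $\mu_1=\mu_2=0$, $\mu_3=b>0$, still one-dimensional — consistent with $b>a\ge0$). This is exactly the stated eigenvalue pattern $b,a,-a$ with $b>a\ge0$. \textbf{Exactly two vanish:} say $s_2=s_3=0$, i.e. $\mu_1+\mu_3=0$ and $\mu_1+\mu_2=0$, giving $\mu_2=\mu_3=-\mu_1$; the remaining constraint $s_1=\mu_2+\mu_3=-2\mu_1\ge0$ means $\mu_1\ge0$, and $\mu_1>0$ (else $F\equiv0$, the all-three case), so eigenvalues are $a,-a,-a$ with $a>0$; note this is the same as eigenvalues $a,a,-a$ up to relabeling/sign, matching the statement. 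Here $\dim\calR=2$ and $\calR$ is a closed, connected, boundaryless, totally geodesic surface in $\SO3$.

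For that last case I still need to identify the surface as $\mathbb{P}_2(\R)$, and this is the step I expect to be the main obstacle, since "totally geodesic connected closed surface in $\SO3$" is not by itself enough to pin down the diffeomorphism type without a little more work. I would proceed concretely: with $F=\diag(a,-a,-a)=-aI+2a\,e_1e_1^T$ up to an additive multiple of $I$ (which does not affect $\calR$ since $\tr$ is constant on $\SO3$... more precisely $F(R)$ changes by a constant), one checks $\calR=\argmax_{R\in\SO3}F(R)=\argmax_R\,e_1^T R e_1=\{R\in\SO3: Re_1=e_1\}$, using that $e_1^TRe_1\le|e_1|^2$ with equality iff $Re_1=e_1$. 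But $\{R:Re_1=e_1\}$ is a copy of $\SO2$, which is a circle, not $\mathbb{P}_2(\R)$ — so I must have the sign of $a$ backwards, and the correct reading is $F=\diag(-a,a,a)=aI-2a\,e_1e_1^T$ (eigenvalues $a,a,-a$), giving $\calR=\argmin_R\,e_1^TRe_1=\{R:Re_1=-e_1\}$, still a circle. The resolution: with eigenvalues $a,a,-a$ the maximizing condition is $F(R)=\sum_k\mu_k(Re_k)\cdot e_k$ maximized, i.e. $a((Re_2)\cdot e_2+(Re_3)\cdot e_3)-a(Re_1)\cdot e_1$ maximized; parametrizing $R$ by its axis, the maximum set is $\{R_{\mathbf n}(\pi):\mathbf n\perp e_1\}\cup\{I\}$ organized so that $\calR\cong S^2/\!\sim$. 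I would make this precise by exhibiting the diffeomorphism $S^2/\{\pm\}\to\calR$, $[\mathbf n]\mapsto$ rotation by $\pi$ about $\mathbf n$ extended continuously (the only subtlety being the antipodal identification, which is exactly why one gets $\mathbb{P}_2(\R)$ and not $S^2$), and then invoke that the round metric on $S^2$ descends, matching the totally-geodesic conclusion of Proposition~\ref{pn:calR_manifold}. Finally I would assemble the four cases into the three bullets of the statement (merging "one vanishes" into the circle case and "two vanish" into the $\mathbb{P}_2(\R)$ case, with "zero" and "three" giving $\{I\}$ and $\SO3$), and verify the stated iff-conditions on the eigenvalues of $F$ are precisely the conditions $s_k=0$ translated back.
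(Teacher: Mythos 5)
Your overall strategy runs parallel to the paper's, but your computation of $T\calR_I$ is genuinely more elementary: instead of the paper's second-fundamental-form/principal-curvature argument (which works for all $n$), you use the three-dimensional axis-vector identity $W^2 = ww^T - |w|^2 I$ to show directly that $F(W^2) = -\sum_k s_k w_k^2$ with $s_k = \mu_i+\mu_j$, so $\dim T\calR_I$ equals the number of vanishing $s_k$. This is exactly the paper's count of vanishing principal curvatures, reached by hand, and it is a perfectly good substitute in dimension three.

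There are, however, concrete errors in the ``exactly two vanish'' case. You write that $s_1 = -2\mu_1 \ge 0$ ``means $\mu_1\ge0$'' --- it means $\mu_1\le0$, and $\mu_1<0$ once $s_1>0$. Setting $a=-\mu_1>0$ then gives eigenvalues $-a,a,a$, i.e.\ the multiset $\{a,a,-a\}$, which is the stated pattern; but your intermediate claim that ``$a,-a,-a$ \dots\ is the same as $a,a,-a$ up to relabeling/sign'' is false --- those two multisets are negatives of one another, and $\{a,-a,-a\}$ would make $I$ a \emph{minimizer} of $F$ on $\SO{3}$, violating the standing normalization $I\in\calR$. Your later self-correction catches the sign, but the resulting explicit description of $\calR$ as $\{R_{\mathbf n}(\pi):\mathbf n\perp e_1\}\cup\{I\}$ is still wrong: with $F\sim\diag(-a,a,a)$ the optimal set (quaternion condition $p_2=0$) consists of \emph{all} rotations about axes perpendicular to $e_1$, of any angle, together with $I$ --- a two-parameter family. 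Rotations by $\pi$ alone give only a circle, so that set cannot be $\mathbb{P}_2(\R)$.

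The step you defer --- identifying the dimension-two case as $\mathbb{P}_2(\R)$ --- is exactly where the paper has the clean idea you are missing, and your stated worry there is misplaced: a closed, connected, boundaryless, totally geodesic two-dimensional submanifold of $\SO{3}$ \emph{is} determined by this data alone, because $\SO{3}$ with the bi-invariant metric is a positively curved space form. Lift to the universal cover $S^3$: a complete totally geodesic surface through $p$ is the exponential image of a two-plane in $T_pS^3$, i.e.\ a great $S^2$, and projecting by the antipodal identification gives $\mathbb{P}_2(\R)$. With that in hand, the explicit quaternion computation ($\calR=\{p_2=0\}$) becomes a confirmation rather than the crux. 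Fix the sign slip, correct the description of $\calR$, and replace your sketched construction of a diffeomorphism by the covering-space argument, and the proof is complete.
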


\begin{proof}
\paragraph{Classification of the possible isometry classes of $\calR$.}
Assume that $\calR\neq\{I\}$, hence it is a closed, connected, boundryless totally-geodesic submanifold of $\SO{3}$.
In particular, $\calR$ is the image of the exponential map of $\SO{3}$, restricted to the subspace $T\calR_I\subset T\SO{3}_I$. This is because every complete manifold is the image of its exponential map, and the exponential map of a totally-geodesic submanifold is the exponential map of the ambient manifold restricted to the tangent plane of the submanifold.
It follows that if $\dim \calR = 1$, then $\calR$ consists of a single, closed geodesic.
If $\dim \calR = 3 = \dim \SO{3}$, then $T\calR_I = T\SO{3}_I$, hence $\calR = \SO{3}$.

Note that $\SO{3}$, with the metric induced from $\mathbb{M}^{3\times 3}$, is isometric to $S^3/{\sim}$, where $S^3$ is the round 3-sphere, and $\sim$ is the identification of antipodal points. 
This follows since in both cases the metric obtained is bi-invariant with respect to the group action, and such a metric is unique.\footnote{In the case of $S^3$, with its canonical embedding into $\R^4$, the group action is quaternion conjugation, where we identify $p=(p_1,p_2,p_3,p_4)\in S^3$ with the quaternion $p_1 + p_2{\bf i} +p_3 {\bf j} + p_4 {\bf k}$.}
Denote by $\pi : S^3 \to \SO{3}$ the covering map. 
If $\dim \calR = 2$, then $\pi^{-1} \calR$ is a connected, totally-geodesic, complete two-dimensional submanifold of $S^3$, hence it is isometric to the round $S^2$ (since the image of a two-dimensional subspace of $T S^3_p$ under the exponential map of $S^3$ is isometric to $S^2$).
Thus $\calR$ is isometric to $\mathbb{P}_2(\R) = S^2\big/ \sim$.
This completes the classification of the possible isometry classes of $\calR$.

\paragraph{The principal curvatures of $\SO{n}$ in $\mathbb{M}^{n\times n}$.}
In order to relate the eigenvalues of $F$ to the structure of $\calR$, we need first to recall the second fundamental form of $\SO{n}$ in $\mathbb{M}^{n\times n}$.\footnote{This is by no means a new result; here we follow the presentation as in \cite{Bry18}.}
Generally, the second fundamental form of a submanifold $\mathcal{M} \subset \mathcal{N}$ at $p\in \mathcal{M}$ is the vector-valued quadratic form
$\textup{II}_p: T\mathcal{M}_p \to N\mathcal{M}_p$ defined by $\textup{II}_p(X) := \nabla^{\mathcal{N}}_X X - \nabla^{\mathcal{M}}_X X$ (here $N\mathcal{M}_p$ is the normal bundle of $\mathcal{M}$ at $p$, and $\nabla^{\mathcal{M}}$ and $\nabla^{\mathcal{N}}$ are the Levi-Civita covariant derivatives of $\mathcal{M}$ and $\mathcal{N}$, respectively).
The second fundamental form of $\mathcal{M}$ in direction $\eta \subset N\mathcal{M}_p$ is the quadratic form $X\mapsto \inner{\textup{II}_p(X), \eta}$, and the principal curvatures of $\mathcal{M}$ in direction $\eta$ are the eigenvalues of this form (with respect to an orthonormal basis of $T\mathcal{M}_p$).
If $\mathcal{M}$ is totally geodesic in $\mathcal{N}$, then its second form vanishes identically.

Now let $\mathcal{N} = \R^D$. Since $T\mathcal{M}_p \oplus N\mathcal{M}_p =\R^D$, we can write $\mathcal{M}$, at the vicinity of $p$, as a graph of a function $f:T\mathcal{M}_p \to N\mathcal{M}_p$, whose differential at $p$ vanishes.
In this case we can identify the second fundamental form as the quadratic correction of $f$, that is, $f(X) = f(0) + \textup{II}(X) + O(|X|^3)$.

In our case, the tangent and normal spaces of $\SO{n}$ at $I$ are $\mathbb{M}^{n\times n}_{\text{skew}}$ and $\mathbb{M}^{n\times n}_{\text{sym}}$, respectively.
The map $W\mapsto e^{W}$ maps $\mathbb{M}^{n\times n}_{\text{skew}}$ to $\SO{n}$; the decomposition of $e^W$ into skew and symmetric parts is given by
\[
e^W = \sinh W + \cosh W = \sinh W + \sqrt{I + \sinh^2 W}.
\]
Therefore, since $W\mapsto \sinh W$ is a diffeomorphism of $\mathbb{M}^{n\times n}_{\text{sym}}$ at the vicinity of $0$, we obtain that $\SO{n}$ is the graph of the function $f: \mathbb{M}^{n\times n}_{\text{skew}}\to \mathbb{M}^{n\times n}_{\text{sym}}$, defined by
\[
f(W) = \sqrt{I + W^2} = I + \frac{1}{2} W^2 + O(|W|^4)
\]
for small enough $W$.
Thus the second form of $\SO{n}$ at the identity is $\textup{II}(W) = \frac{1}{2}W^2$.
The second fundamental form in a direction $S \in \mathbb{M}^{n\times n}_{\text{sym}}$ is then the map $W\mapsto \inner{\frac{1}{2}W^2 , S}$.
If $s_1,\ldots, s_n$ are the eigenvalues of $S$, then a direct calculation shows that $-\frac{1}{4}(s_i + s_j)$, $i<j$ are the principal curvatures of $\SO{n}$ at $I$ in direction $S$.\footnote{Indeed, consider, for $1\le i<j\le n$, the orthonormal basis $W_{ij} = \frac{1}{\sqrt 2}(e_{ij} - e_{ji})$ of $\mathbb{M}^{n\times n}_{\text{skew}}$, where $e_{ij}$ is the standard matrix basis.
If $S$ is diagonal with entries $s_1,\ldots,s_n$, then for $W= \sum_{i<j} \alpha_{ij}W_{ij}$, we have that
\[
\inner{\frac{1}{2}W^2 , S} = -\frac{1}{4}\sum_{i<j} \alpha_{ij}^2 (s_i+s_j),
\]
showing that the eigenvalues are $-\frac{1}{4}(s_i + s_j)$.
For a general $S$, we have that $S = R^T D R$ for some rotation $R$ and diagonal matrix $D$. The calculation is then similar, using the orthonormal basis $R^T W_{ij} R$.}

Back to our case, we show that the second form of $\SO{n}$ at the identity in the direction $F$ is negative semi-definite.
That is, if $f_1,\ldots, f_n$ are the eigenvalues of $F$, then $f_i+f_j \ge 0$ for all $i\ne j$.
Assume otherwise, and without loss of generality assume that $f_1+f_2 <0$.
This contradicts \eqref{eq:FDi}: 
indeed, we can write $F = R^T \diag(f_1,\ldots, f_n) R$ for some $R\in \SO{n}$, and then, with the notation of \eqref{eq:FDi}, we obtain
\[
F(R^T D_1 R) = \inner{\diag(f_1,\ldots, f_n) , D_1} = f_1 + f_2 <0,
\]
which is a contradiction to \eqref{eq:FDi}.

\paragraph{The relation between eigenvalues of $F$ and $\dim \calR$.}
Denote by $H$ the hyperplane $H := F^{-1}\{F(I)\} \subset \mathbb{M}^{3\times 3}$.
The normal to $H$ is, by definition, the matrix $F$.
We have the inclusions
\[
\calR \subset \SO{3} \subset \mathbb{M}^{3\times 3} \quad\text{and}\quad  \calR \subset H \subset \mathbb{M}^{3\times 3}.
\]
In what follows, $\textup{II}^{\calR, H}$ denotes the second fundamental form of $\calR$ in $H$ at $I$, and similarly for the other inclusions; $\textup{II}^{\calR, H}_F$ denotes the second fundamental form in direction $F$ at $I$, and so on.
Since $H$ is a hyperplane, it is totally geodesic in $\mathbb{M}^{3\times 3}$.
It follows that $\textup{II}^{\calR, \mathbb{M}^{3\times 3}}_F$ vanishes:
\[
\begin{split}
\textup{II}^{\calR, \mathbb{M}^{3\times 3}}_F(W) := \inner{\textup{II}^{\calR, \mathbb{M}^{3\times 3}}(W) ,F}
	&= \inner{\nabla^{\mathbb{M}^{3\times 3}}_{W} W - \nabla^\calR_W W, F} \\
	&= \inner{\nabla^{\mathbb{M}^{3\times 3}}_{W} W - \nabla^{H}_W W , F} + \inner{\nabla^{H}_W W - \nabla^\calR_W W, F} \\
	&= \inner{\nabla^{H}_W W - \nabla^\calR_W W, F},
\end{split}
\]
where we used the fact that $H$ is totally geodesic in $\mathbb{M}^{3\times 3}$ and thus $\nabla^{\mathbb{M}^{3\times 3}}_{W} W = \nabla^{H}_W W$.
Now, since $\calR\subset H$, $\nabla^{H}_W W - \nabla^\calR_W W$ is a tangent vector to $H$; on the other hand, $F$ is perpendicular to $H$, hence $\textup{II}^{\calR, \mathbb{M}^{3\times 3}}_F=0$.
On the other hand,
since $\calR\subset \SO{3}$ is totally geodesic, $\textup{II}^{\calR, \SO{3}} = 0$. 
Thus, by a similar argument (with $\SO{3}$ instead of $H$ and without taking the inner product with $F$), we obtain that $\textup{II}^{\calR, \mathbb{M}^{3\times 3}} = \left.\textup{II}^{\SO{3}, \mathbb{M}^{3\times 3}} \right|_{T\calR_I}$.
Thus we obtain that
\[
\left.\textup{II}^{\SO{3}, \mathbb{M}^{3\times 3}}_F \right|_{T\calR_I} \equiv 0.
\]
Recall that $\textup{II}^{\SO{3}, \mathbb{M}^{3\times 3}}_F$ is a negative semi-definite quadratic form.
Since it vanishes on a subspace of dimension $\dim \calR$, it follows that at least $\dim \calR$ of the principal curvatures of $\SO{n}$ in the direction $F$ vanish.
As shown above, the principal curvatures are $-\frac{1}{4}(f_1+f_2)$, $-\frac{1}{4}(f_2+f_3)$ and $-\frac{1}{4}(f_1+f_3)$, where $f_i$ are the eigenvalues of $F$.
\begin{itemize}
\item If $\dim \calR = 3$, it follows that $f_1=f_2=f_3=0$, and thus $F=0$.
	Obviously, if $F=0$ then $\calR = \SO{3}$ and thus $\dim \calR = 3$.
\item If $\dim \calR = 2$, we have that, without loss of generality $f_1=f_2 =-f_3$.
	Since $\textup{II}^{\SO{3}, \mathbb{M}^{3\times 3}}_F$ is negative semi-definite, we have that $f_1+f_2 \ge 0$; if equality holds, then $F=0$ and $\dim \calR = 3$.
	We thus obtain that $\dim \calR = 2$ implies that the eigenvalues of $F$ are $a,a,-a$ for some $a>0$.
\item If $\dim \calR = 1$, we have that, without loss of generality, $f_2 = -f_3$.
	Again, the negative semi-definiteness of $\textup{II}^{\SO{3}, \mathbb{M}^{3\times 3}}_F$ implies that $f_1 \ge |f_2| = |f_3|$;
	thus  $\dim \calR = 1$ implies that the eigenvalues of $F$ are $b,a,-a$ for some $b>a\ge 0$.
\end{itemize}

In order to complete the proof we need to show that if the eigenvalues of $F$ are $a,a,-a$ for some $a>0$ then $\dim \calR = 2$, and if they are $b,a,-a$ for $b> a \ge 0$, then $\dim \calR = 1$.
Assume that for some $Q\in \SO{3}$,
\[
F = Q^T \diag(a,a,-a)\, Q.
\]
Thus, for a general matrix $R\in \SO{3}$, we have that
\[
F(Q^T R Q) = a(R_{11} + R_{22} - R_{33}).
\]
Writing $R$ in a quaternion representation, that is $R = p_1 + p_2{\bf i} +p_3 {\bf j} + p_4 {\bf k}$ for a unit vector $p=(p_1, p_2, p_3, p_4)$, we obtain that 
\[
F(Q^T R Q) = a(1 - 4 p_4^2).
\]
Thus $\calR$ is the two-dimensional submanifold $Q\{p_4=0\}Q^T$.

Next, assume that for some $Q\in \SO{3}$ and $b>a\ge 0$, we have
\[
F = Q^T \diag(b,a,-a)\, Q.
\]
In this case $F(Q^T R Q)$ is maximized for all rotations $R$ around the $x$-axis.
Thus $\dim\calR \ge 1$, and since $b>a$, we have that $\dim\calR = 1$.
\end{proof}

\begin{example}[Uniform tension]
Let $\W \subset \R^n$ be a Lipschitz domain, and denote by $\nu$ the outer normal of $\pl \W$.
Let the traction force $f$ be $f=\nu$, and set the body force $g$ to be zero.
We then have, using the divergence theorem, that
\[
F(A) := \int_{\pl\W} Ax \cdot \nu \,d\calH^{n-1} = |\W| \tr(A).
\]
It immediately follows that $I$ is the unique maximizer of $F$ on $\SO{n}$.
That is, $\calR = \{I\}$ in this case.\footnote{This example essentially appears in \cite[Remark~2.8]{MPT19}.}
\end{example}

\begin{example}[Uniform compression]
Reversing the sign from the previous example, that is, taking $f = -\nu$, we obtain
\[
F(A) = -|\W| \tr(A).
\]
In this case $I$ is a minimizer of $F$ among rotations, hence, in order to use the formalism of this paper, we first need to rotate the system by a maximizer of $F$.\footnote{Compare with \cite[Remark~2.7, Example~4.6]{MPT19}.}

If $n=2$ (or more generally, if $n$ is even), then $-I$ is a maximizer, and rotating by it reduces this example to the previous one, with a unique maximizer.

If $n=3$, we recall that for $R = p_1 + p_2{\bf i} +p_3 {\bf j} + p_4 {\bf k}$, $\tr(R) = 3 - 4(p_2^2 + p_3^2 + p_4^2)$.
Thus, a maximizer of $F$ in $\SO{3}$ is any rotation with $p_1=0$ (that is, a rotation by $\pi$ around any axis).
In particular, we obtain that $\calR$ is two-dimensional in this case.
\end{example}

\begin{example}[Tangential forces]
Consider now the two dimensional case $n=2$, and let the traction force be $f=Z\tau$, where $\tau$ is the unit tangent to $\pl \W$, and $Z$ is a reflection matrix, say, a reflection by the $x_2$ axis.
If there are no body forces, we have (by Green's theorem),
\[
F(A) := \int_{\pl\W} ZAx \cdot \tau \,d\calH^{1} = |\W| (A_{12} + A_{21}).
\]
In particular, $F|_{\SO{2}} = 0$, and thus $\calR = \SO{2}$.
By considering a cylinder $\W\times (0,1)$, this example can be lifted to three dimensions, thus obtaining a three-dimensional example in which $\dim \calR = 1$.
\end{example}

\begin{example}[Full degeneracy]
In dimensions $n>2$, $\calR = \SO{n}$ implies that $F\equiv 0$ (the previous example is a counterexample for this for $n=2$).
However, as the following example shows, $F \equiv 0$ does not imply that the forces themselves must be zero.
Let $\W$ be the unit ball, and consider zero traction forces $f\equiv 0$ and a body force $g(x) = \rho(|x|)e_1$ for some sufficiently nice function $\rho:(0,1)\to \R$.
In order for the forces to be equilibrated \eqref{eq:total_force_2}, we must have
\[
0 = \int_{\W} \rho(|x|)\,dx = n\omega_n \int_0^1 \rho(r) \,r^{n-1} \,dr,
\]
where $\omega_n$ is the measure of the unit ball.
For example, if $n=3$, we can take $\rho(r) = 1 -\frac{4}{3}r$ or $\rho(r) = \frac{1}{r^2} - \frac{2}{r}$.
For any such force, we obtain that $F\equiv 0$:
\[
F(A) = \sum_{j=1}^n A_{1j} \int_\W \rho(r) x_j \,dx = 0,
\]
since the domain is a ball.
\end{example}

\begin{example}[Gravity field]
In dimension $n=3$ let the traction force $f$ be zero and let the body force $g$ be given by the gravity field
$$
g(x)=-\bar g\bar\rho(x)e_3 \qquad \text{ with } \ \bar\rho(x):= \rho(x)-\frac1{|\Omega|}\int_\Omega \rho(z)\,dz,
$$
where $\bar g$ is the gravitational constant and $\rho\in L^2(\Omega)$ is the mass density. The normalization constant $-\frac1{|\Omega|}\int_\Omega \rho(z)\,dz$ is introduced to guarantee the forces to be equilibrated. By direct computations we have
$$
F(A) = -\bar g \sum_{j=1}^3A_{3j} \int_\Omega \bar\rho(x) x_j\,dx.
$$
Set $b:= \int_\Omega \bar\rho(x) x\,dx$. If $b=0$, then $F\equiv 0$ and $\calR = \SO{3}$. If $b\neq0$, then $\mathcal R$ is the set of all rotations having $-b/|b|$ as third row. Note that this is a mechanically relevant example, which is covered by our analysis (after rotating the system, so that $I\in\calR$), whereas the compatibility assumption of \cite{MPT19} is not satisfied. 
\end{example}

\paragraph{Acknowledgements}
CM was partially supported by ISF grant 1269/19. MGM acknowledges support by the Universit\`a degli Studi di Pavia through the 2017 Blue Sky Research Project ``Plasticity at different scales: micro to macro", by MIUR--PRIN 2017, and by GNAMPA--INdAM.

\appendix
\section{An example for Lemma~\ref{lem:calR_geodesics}}\label{app:example}

Here we show that, for $n>3$, Lemma~\ref{lem:calR_geodesics} does not imply that if $R_0$ and $R_1$ are two distinct elements of $\calR$,
then \emph{any} geodesic between $R_0$ and $R_1$ lies in $\calR$.
Let
\[
S := \brk{\begin{matrix} 
0 &  &  & \\ 
 & 0 & & \\
 &  & 1 & \\
 &  &  & 1
\end{matrix} }, \qquad
F(A) := \inner{S,A}.
\]
Since all the entries of a rotation matrix are between $-1$ and $1$, it is obvious that $R_0 :=I\in \calR$.
Choose $\lambda$ and $\mu$ such that $\rho := \lambda/\mu$ is not an integer, and let
\[
W_0 = 
\brk{\begin{matrix} 
0 & \lambda &  & \\ 
-\lambda & 0 & & \\
 &  & 0 & \mu\\
 &  & -\mu & 0
\end{matrix} }.
\]
We then have
\[
F(e^{tW_0}) = 2\cos(\mu t),
\]
hence $e^{tW_0}\in \calR$ if and only if $t\in \frac{2\pi}{\mu} \bbZ$, and since $\lambda/\mu$ is not an integer, $R_1 := e^{\frac{2\pi}{\mu} W_0}\ne I$.
In other words, the geodesic $e^{tW_0}$ between $I$ and $R_1$ does not belong to $\calR$.
The geodesic connecting $I$ and $R_1$ that does belongs to $\calR$ is $e^{tW_1}$, where 
\[
W_1:=
\brk{\begin{matrix} 
0 & \lambda &  & \\ 
-\lambda & 0 & & \\
 &  & 0 & 0\\
 &  & 0 & 0
\end{matrix} }.
\]

In dimensions $n=2,3$ this cannot happen. In these dimensions we have the Rodrigues formula
\beq\label{eq:Rodrigues}
\exp (tW) = I + \sin t W + (1-\cos t)W^2,
\eeq
whenever $W\in\mathbb{M}^{n\times n}_{\text{skew}}$, $|W| =\sqrt{2}$.\footnote{The Rodrigues formula \eqref{eq:Rodrigues} easily follows from \eqref{eq:skew_canonical_form}, since in dimensions $n=2,3$ we have $k=1$ in \eqref{eq:skew_canonical_form}, and $|W|=\sqrt{2}$ then implies $\lambda_1 = \pm 1$, from which it follows that $W^3=-W$.}
Let $R_0,R_1\in\calR$. 
If $R_1=R_0 e^{t_0W}$ for some $t_0\ne 0$ and $W\in\mathbb{M}^{n\times n}_{\text{skew}}$, $|W| =\sqrt{2}$, then $F(R_0)=F(R_1)$, together with \eqref{eq:Rodrigues} imply that $F(R_0W^2) = 0$.
Using \eqref{eq:Rodrigues} again (or Lemma~\ref{lem:F_W_squared_zero}), we have that $R_0e^{tW}\in \calR$ for every $t\in \R$.
In other words, the assumption in Lemma~\ref{lem:calR_geodesics}, that $W$ needs to be of the form of Lemma~\ref{lem:geodesics_SO}, can be dropped in dimensions $n=2,3$.


{\footnotesize
\bibliographystyle{amsalpha}

\begin{thebibliography}{DMNP02}

\bibitem[Bry18]{Bry18}
R. Bryant, \emph{Principal curvatures of {$\mathbb{R}^{n^2}$}-embedded {$\SO{n}$}}, 
URL: {\url{https://mathoverflow.net/q/313403}}, 2018.

\bibitem[DMNP02]{DNP02}
G. Dal~Maso, M. Negri, and D. Percivale, \emph{Linearized
  elasticity as {$\Gamma$}-limit of finite elasticity}, Set-Valued Analysis
  \textbf{10} (2002), 165--183.

\bibitem[FJM02]{FJM02b}
G.~Friesecke, R.D. James, and S.~M\"uller, \emph{A theorem on geometric
  rigidity and the derivation of nonlinear plate theory from three dimensional
  elasticity}, Comm. Pure Appl. Math. \textbf{55} (2002), 1461--1506.

\bibitem[JS20]{JS20}
M. Jesenko and B. Schmidt, \emph{Geometric linearization of theories for
  incompressible elastic materials and applications},
  \url{https://arxiv.org/abs/2004.11271}, 2020.

\bibitem[MP20]{MP20b}
E. Mainini and D. Percivale, \emph{Variational linearization of pure
  traction problems in incompressible elasticity},
Z. Angew. Math. Phys. \textbf{71} (2020), 146. 
 
\bibitem[MPT19a]{MPT19}
F. Maddalena, D. Percivale, and F. Tomarelli, \emph{The gap in
  pure traction problems between linear elasticity and variational limit of
  finite elasticity}, Arch. Rational Mech. Anal. \textbf{234} (2019),
  1091--1120.

\bibitem[MPT19b]{MPT19b}
\bysame, \emph{A new variational approach to linearization of traction problems
  in elasticity}, Journal of Optimization Theory and Applications \textbf{182}
  (2019), 383--403.

\end{thebibliography}
\providecommand{\bysame}{\leavevmode\hbox to3em{\hrulefill}\thinspace}
\providecommand{\MR}{\relax\ifhmode\unskip\space\fi MR }
\providecommand{\MRhref}[2]{%
  \href{http://www.ams.org/mathscinet-getitem?mr=#1}{#2}
}
\providecommand{\href}[2]{#2}

}

\Addresses

\end{document}